\DeclareMathOperator{\ab}{ab}
\newcommand{\N}{\mathbb{N}}
\newtheorem{prevtheorem}{Theorem}
\newtheorem{theorem}{Theorem}
\newtheorem{lemma}[theorem]{Lemma}
\newtheorem{conjecture}[theorem]{Conjecture}
\newtheorem{corollary}[theorem]{Corollary}
\theoremstyle{definition}
\newtheorem{example}[theorem]{Example}
\newtheorem{question}[theorem]{Question}
\theoremstyle{remark}
\newtheorem{remark}[theorem]{Remark}
\title{The probability distribution of word maps on finite groups}\date{\today}
\author{William Cocke}
\address{William Cocke\\Department of Mathematics, University of Wisconsin-Madison}
\email{cocke@math.wisc.edu}
\author{Meng-Che ``Turbo" Ho}
\address{Meng-Che ``Turbo" Ho\\Department of Mathematics, Purdue University}
\email{ho140@purdue.edu}
\begin{document}

\maketitle
\begin{abstract}
Word maps provide a wealth of information about finite groups. We examine the connection between the probability distribution induced by a word map and the underlying structure of a finite group. We show that a finite group is nilpotent if and only if every surjective word map has fibers of uniform size. Moreover, we show that probability distributions themselves are sufficient to identify nilpotent groups, and these same distributions can be used to determine abelian groups up to isomorphism. In addition we answer a question of Amit and Vishne.
\end{abstract}

\textbf{Keywords}: Word maps; nilpotent groups
\section{Introduction}

A word $w$ is an element of the free group $w\in \textbf{F}_n=\textbf{F}\langle x_1,\dots,x_n \rangle$. The length of $w$ is the number of variables and their inverses that appear in $w$. For any group $G$, the word $w$ induces a map $w:G^n\rightarrow G$. We write $w$ for both the word $w$ and the word map on $G$, and write $w(G)$ to mean the image $w(G^n)$ of the word map. We will also write $\overline{g}$ to mean the tuple $(g_1,g_2,\cdots,g_n)$. If $G$ is finite then the word map $w$ induces a probability distribution on $G$ where 
\[
\mu_{G,w}(g) = \frac{ \left|\left\{ (\overline{g}\in G^n : w(\overline{g}) = g\right\}\right|}{|G|^n}.
\] 

There has been some interest in examining the relationship between the probability distributions induced by word maps on $G$ and the structure of the underlying finite group $G$; for example, see Ab\'{e}rt \cite{Ab06}. Nikolov and Segal \cite{NS} show that a finite group is solvable if and only there is some $\epsilon>0$ for which $\mu_{G,w}(1)\geq \epsilon$ for all words $w$. Since a finite group admits infinitely many word maps, results about all word maps are in general not computable. However, a finite group admits only finitely many word maps on a fixed number of variables, it is natural to ask what structural information about $G$ can be observed from the collection of probability distributions induced by all $n$-variable word maps on $G$. 

We formalize the above approach as the following question: 

\begin{question}
Fix $n\in \N$, a finite group $G$, and an enumeration of the elements of $G$. Let $g_i$ be the $i$-th element of $G$. Consider the probability distribution of the word map $w$ as a function $f_w: |G| \to \N$ where $f_w(i) = |w^{-1}(g_i)|$. Given the distributions of all $n$-variable word maps of $G$ as a set, what information can be recovered about $G$?
\end{question}

In particular the actual distribution $\mu_{G,w}(g)$ takes as input an element $g$ of the group $G$. The distribution $f_w$ takes as input a natural number, which corresponds to an unknown element $g$ of an unknown group $G$. To the reader uninterested in the semantics of our formalization, it suffices to say that we are interested in distributions that are disconnected from the underlying group. We then ask what information about the group remains. 

In this article we show the following:

\begin{prevtheorem}\label{nilpotent-from-distributions}
For all $n\in \N$, we can identify when a finite group $G$ is nilpotent from the set of distributions of all $n$-variable word maps on $G$.
\end{prevtheorem}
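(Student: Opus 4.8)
The plan is to prove a purely group-theoretic characterization and then package it so that it can be read off the abstract distribution data. The characterization I aim for is: a finite group $G$ is nilpotent if and only if every surjective word map on $G$ has all fibres of equal size. Granting this, here is how it yields Theorem~\ref{nilpotent-from-distributions}. From the set of distributions one recovers $|G|$ (the size of the domain of each $f_w$) and $n$ (from $\sum_i f_w(i)=|G|^n$). A function $f_w$ in the set records a \emph{surjective} word map exactly when it is nowhere zero, and such a map has uniform fibres exactly when $f_w$ is constant, necessarily with value $|G|^{n-1}$. So if the set contains a nowhere-zero non-constant function, $G$ is not nilpotent; the content of the theorem is the converse, and the subtle point is that the characterization ranges over words in arbitrarily many variables while the distribution set only sees $n$-variable words.

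First I would prove the forward implication of the characterization: if $G$ is nilpotent and $w$ is surjective, then $|w^{-1}(g)|=|G|^{n-1}$ for every $g$. I would induct on $|G|$. Let $Z=Z(G)\neq 1$ and let $e_i$ be the exponent sum of $x_i$ in $w$. Since $Z$ is central, substituting $x_iz_i$ for $x_i$ (with $z_i\in Z$) multiplies $w(\overline{x})$ by $z_1^{e_1}\cdots z_n^{e_n}$, so we obtain a homomorphism $\psi\colon Z^n\to Z$ with image $H:=\{z_1^{e_1}\cdots z_n^{e_n}:z_i\in Z\}$, a characteristic subgroup of $G$. If $H=1$ then every $e_i$ is a multiple of $\exp(Z)$, hence of some prime $p$ dividing $|G|$, and then the Sylow $p$-component of $w$ lands in the Frattini subgroup of the Sylow $p$-subgroup, so $w$ is not surjective --- a contradiction. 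Hence $H\neq 1$, $G/H$ is a proper nilpotent quotient on which $w$ remains surjective, and the word map of $w$ on $G/H$ factors through $(G/Z)^n$ because shifting an argument by an element of $Z/H$ changes the output by a product of $e_i$-th powers lying in $H$. A bookkeeping of the lifts along $G\to G/H$ and $G\to G/Z$, using $|\ker\psi|=|Z|^n/|H|$ and the inductive hypothesis for $G/H$, then forces every fibre of $w$ on $G$ to have size $|G|^{n-1}$.

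For the reverse implication I would produce, given a non-nilpotent $G$, a surjective word map with unequal fibres. The guiding example is $G=S_3$ with $w=x_1^2x_2^3$: this is onto (the squares fill $A_3$ and the cubes contribute a transposition), yet a direct count $|w^{-1}(g)|=\sum_h\#\{k:k^2=gh^{-3}\}$ shows the identity receives strictly more preimages than a $3$-cycle. In general, if the Sylow $p$-subgroup of $G$ is not normal, I would use a word such as $x_1^{m}x_2^{m'}$ with $m=|G|_{p'}$ and $m'=|G|_{p}$, so that the first factor runs bijectively over the $p$-elements and the second over the $p'$-elements; the product is surjective because every element is the product of its commuting $p$- and $p'$-parts, and the failure of the $p$-elements to form a subgroup should obstruct the fibres from equalising. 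Since a genuinely $k$-variable word can be padded with trivial factors to any $n\geq k$ without changing its distribution beyond a global scalar, this settles every $n\geq 2$.

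The case $n=1$, and more broadly the restriction to a fixed number of variables, is where I expect the real difficulty. On $S_3$ the only surjective $1$-variable word maps $x\mapsto x^k$ are bijections, so the surjective--uniform test is vacuous and cannot witness non-nilpotency directly; one needs a further invariant readable from the $1$-variable distributions. The extra handle is that $w=x_1^{|G|}$ has distribution supported on a single index, which locates the identity element, after which the value of $f_{x_1^k}$ at that index equals $\#\{x:x^k=1\}$; one thus recovers the whole multiset of element orders of $G$, together with the full shape of each power-map distribution. The plan is to argue that these data, combined with the (possibly vacuous) surjective--uniform test, still certify non-nilpotency, and making this combinatorial reconstruction work for small $n$ is the main obstacle.
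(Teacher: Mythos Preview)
Your architecture matches the paper's: prove the surjective--uniform characterization (Theorem~B), read it off directly for $n\ge2$, and handle $n=1$ by a separate argument on power maps.

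For the forward implication of Theorem~B you induct on $|G|$ via $Z(G)$, whereas the paper inducts on the nilpotency class after putting the word in the shape $x_1hc$ with $c$ a commutator word; both arguments go through. For the reverse implication your proposed word $w=x_1^{m}x_2^{p^k}$ with $m=|G|_{p'}$ is simpler than the paper's, and it does succeed, but ``should obstruct'' has to be replaced by an actual count. Note first that $x_1^m$ is always a $p$-element and $x_2^{p^k}$ a $p'$-element (so your claim that $x_1^m$ runs \emph{bijectively} over the $p$-elements is false---it is only surjective---but surjectivity is all you need). Hence $w(\overline x)=1$ forces $x_1^m=1$ and $x_2^{p^k}=1$ separately, so
\[
|w^{-1}(1)|=\bigl|\{x:x^m=1\}\bigr|\cdot\bigl|\{x:x^{p^k}=1\}\bigr|\ \ge\ m\cdot(\#\ p\text{-elements}),
\]
and if the Sylow $p$-subgroup is not normal there are strictly more than $p^k$ $p$-elements (two distinct Sylow $p$-subgroups already give that), so $|w^{-1}(1)|>mp^k=|G|$, contradicting uniformity. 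The paper takes a different route: it quotes a lemma that a non-$p$-nilpotent group contains elements $a,b$ of common prime-power order $q^k$ (with $q\neq p$) such that $o(ab)=p$ (or $2,4$ when $p=2$), sets $rp+sq^k=1$, and builds $w=x_1^{sq^k}x_2^{sq^k}(x_1x_2)^{rp}$, exhibiting $|G|+1$ explicit preimages of $1$.

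The genuine gap in your plan is $n=1$, which you leave as ``the main obstacle''. You have already extracted the right data: the identity coordinate, and hence $|\{x:x^k=1\}|$ for every $k$. The paper closes this case via the Frobenius Solutions Theorem (Iiyori--Yamaki): from the distributions one checks, for each prime $p\mid|G|$ with $|G|=p^km$, whether some power map $x^d$ has uniform fibres of size exactly $p^k$ over an image of size $m$; if so one argues that the image is exactly the set of $p'$-elements, so there are exactly $m$ of them, and the Frobenius Solutions Theorem then yields a normal $p$-complement. A shorter finish entirely in the spirit of what you wrote also works: $G$ is nilpotent iff the number of $p$-elements equals $|G|_p$ for every prime $p\mid|G|$, since strictly more than $|G|_p$ $p$-elements already forces more than one Sylow $p$-subgroup. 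Either way, $n=1$ is not a real obstacle once you use this counting; you should state and prove one of these criteria rather than leaving the case open.
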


In particular, part of our proof of Theorem \ref{nilpotent-from-distributions} relies on the following theorem, which is certainly of independent interest:

\begin{prevtheorem}\label{uniform-prev}
Let $G$ be a finite group. Then $G$ is nilpotent if and only if for every surjective word map $w$, the distribution $\mu_{G,w}$ is uniform. 
\end{prevtheorem}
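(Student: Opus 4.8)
The plan is to prove both implications by reducing to the local (Sylow-wise) structure of $G$.

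\emph{Nilpotent $\Rightarrow$ every surjective word map is uniform.} Since a finite nilpotent group is the direct product of its Sylow subgroups $G=P_1\times\cdots\times P_k$ and a word map is evaluated coordinatewise, the fiber $w^{-1}(g)$ is the product of the fibers of $w$ over the $P_i$, and $w$ is surjective on $G$ iff it is surjective on each $P_i$; so it suffices to show that a surjective word map $w\colon P^n\to P$ on a finite $p$-group $P$ has all fibers of size $|P|^{n-1}$. First I would record the surjectivity criterion for $p$-groups: passing to $P/\Phi(P)\cong(\Z/p)^d$, the induced map is the linear map $\overline g\mapsto\sum_i e_i\overline g_i$, where $e_i$ is the exponent sum of $x_i$ in $w$; hence $w$ is surjective on $P$ iff some $e_i$ is prime to $p$ (conversely, if $p\nmid e_i$ then $t\mapsto t^{e_i}$ is a bijection of $P$ and $w$ is already surjective after setting the other variables to $1$). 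Fix such an index $i$, and induct on $|P|$. Choose a central subgroup $Z\le Z(P)$ with $|Z|=p$; then $w$ descends to a surjective word map on $P/Z$, which by induction has uniform fibers of size $(|P|/p)^{n-1}$. Pulling a fiber back along $P^n\to(P/Z)^n$ (a $p^n$-to-one map) gives $\sum_{j=0}^{p-1}|w^{-1}(gz^j)|=p\,|P|^{n-1}$ for a generator $z$ of $Z$. Now pick $s\in Z$ with $s^{e_i}=z$ (possible since $t\mapsto t^{e_i}$ is a bijection of $Z\cong\Z/p$); as $s$ is central, translating the $i$-th coordinate by $s$ multiplies the value of $w$ by $s^{e_i}=z$, so $(g_1,\dots,g_n)\mapsto(g_1,\dots,g_is,\dots,g_n)$ is a bijection $w^{-1}(g)\to w^{-1}(gz)$. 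Hence all $p$ fibers over a $Z$-coset coincide, and by the displayed sum each has size $|P|^{n-1}$.

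\emph{Every surjective word map uniform $\Rightarrow$ nilpotent.} I would prove the contrapositive. If $G$ is not nilpotent, fix a prime $p$ whose Sylow $p$-subgroup is not normal; then $G$ has at least two Sylow $p$-subgroups, so the number $n_p$ of $p$-elements exceeds $|G|_p$ (the union of two distinct subgroups of order $|G|_p$ already does). By the Chinese Remainder Theorem choose integers $A,B$ with $t^A=t_{p'}$ and $t^B=t_p$ for every $t\in G$ (i.e.\ $A\equiv0,\ B\equiv1\pmod{|G|_p}$ and $A\equiv1,\ B\equiv0$ modulo the $p'$-part of $|G|$, where $t_p,t_{p'}$ are the $p$- and $p'$-parts of $t$), and take the two-variable word $w(x_1,x_2)=x_1^Ax_2^B$. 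Then $w(g,g)=g_{p'}g_p=g$, so $w$ is surjective. On the other hand $w(x_1,x_2)=1$ forces $(x_1)_{p'}=\big((x_2)_p\big)^{-1}$, an element that is at once $p$-regular and a $p$-element, hence trivial; so $w^{-1}(1)$ is precisely the set of pairs $(a,b)$ with $a$ a $p$-element and $b$ a $p'$-element, of size $n_p\,n_{p'}$. By Frobenius's theorem the number $n_{p'}=|\{t\in G: t^{|G|_{p'}}=1\}|$ of $p$-regular elements is a positive multiple of $|G|_{p'}$, so $n_{p'}\ge|G|_{p'}$; combined with $n_p>|G|_p$ this yields $|w^{-1}(1)|=n_p n_{p'}>|G|_p|G|_{p'}=|G|=|G|^{2-1}$, so $w$ is not uniform.

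\emph{Where the work is.} The delicate step is the induction in the first implication: one must extract from surjectivity a variable with exponent sum prime to $p$, and then exploit that centrality lets a translation in that coordinate realize an arbitrary shift by a chosen central element $z$ of order $p$. The counting half is comparatively soft, but only after one spots the right surjective word (the "primary-part extractors" $x_1^Ax_2^B$) and invokes Frobenius's divisibility theorem; I would also make sure the reduction to $p$-groups genuinely preserves both surjectivity and the property that all fibers have equal size.
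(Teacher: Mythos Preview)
Your argument is correct, and both halves are organized somewhat differently from the paper. For the forward implication the paper works directly with a nilpotent group $N$, inducts on the nilpotency class, and first normalizes the word (via the exponent-gcd criterion you also identify) to the shape $x_1hc$ with $c$ a commutator word; the bijection between fibers over $a$ and $b$ with $a^{-1}b\in Z(N)$ is then $x_1\mapsto x_1a^{-1}b$, using that $c$ is unchanged by a central shift. Your version first reduces to $p$-groups, inducts on $|P|$, and uses a central shift in a coordinate with $p'$-exponent sum; this is the same mechanism packaged slightly differently, and your Frattini-quotient phrasing of the surjectivity criterion is a clean way to isolate the needed variable. The genuine divergence is in the converse. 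The paper invokes a separate order-theoretic characterization of $p$-nilpotency (the existence of $q$-elements $a,b$ with $ab$ of order $p$, or $2$ or $4$) and builds $w=x_1^{sq^k}x_2^{sq^k}(x_1x_2)^{rp}$, exhibiting $|G|+1$ preimages of $1$ by hand. Your ``primary-part extractor'' $w(x_1,x_2)=x_1^Ax_2^B$ is a different and rather slicker construction: it computes $|w^{-1}(1)|=n_p\,n_{p'}$ exactly, and then only needs the elementary Frobenius divisibility theorem together with the obvious inequality $n_p>|G|_p$ coming from two distinct Sylow $p$-subgroups. This buys you a shorter, more self-contained argument that avoids the auxiliary $p$-nilpotency lemma; the paper's route, on the other hand, ties the failure of uniformity to a concrete arithmetic obstruction inside $G$ and yields the slightly sharper statement that already some \emph{two}-variable surjective word fails uniformity (which you also get).
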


The authors are not aware of any two finite groups that have the same set of probability distributions for all $n$. In this direction we also show the following:

\begin{prevtheorem}\label{distribution-abelian}
For $n>1$, the set of distributions of all $n$-variable word maps on $G$ can be used to identify whether $G$ is abelian; moreover, if $G$ is abelian, then the set of distributions identifies $G$ up to isomorphism.  
\end{prevtheorem}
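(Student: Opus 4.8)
The plan is to treat the two claims in turn; throughout, note that $|G|$ is visible in the data, since every $f_w$ has domain of size $|G|$ and $\sum_i f_w(i)=|G|^n$.

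\emph{Detecting abelianness.} I claim $G$ is abelian if and only if every distribution in the set is \emph{flat}, meaning that $f_w$ takes only the two values $0$ and $|G|^n/m$, where $m=\#\{i:f_w(i)\neq 0\}$ is the size of the support of $f_w$. If $G$ is abelian, then writing $G$ additively a word $w$ with exponent vector $(a_1,\dots,a_n)$ induces the group homomorphism $\overline g\mapsto a_1g_1+\dots+a_ng_n$, whose fibers over its image are the cosets of its kernel; hence $f_w$ is flat. For the converse, suppose $G$ is non-abelian and consider the commutator word $c=[x_1,x_2]$, which lies in $\textbf{F}_n$ since $n>1$; up to the constant factor $|G|^{n-2}$ contributed by the unused variables, its fibers coincide with those of the map $G^2\to G$, $(a,b)\mapsto[a,b]$. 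That map has fiber over $1$ of size $\#\{(a,b)\in G^2:ab=ba\}=\kappa(G)\,|G|$, where $\kappa(G)$ is the number of conjugacy classes of $G$; for any fixed $a$ the set $\{b:[a,b]=g\}$ is empty or a coset of $C_G(g^{-1}a)$, and for $g\neq 1$ the value $a=1$ contributes nothing, so $\#\{(a,b):[a,b]=g\}<\kappa(G)\,|G|$ whenever $g\neq 1$. Since $G$ is non-abelian, some nontrivial $g$ is a commutator, so $f_c$ has at least two distinct nonzero values and is not flat. Hence the set of distributions determines whether $G$ is abelian.

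\emph{Reconstructing an abelian $G$.} Assume now $G$ is abelian. By the computation above, the set of distributions is exactly $\{U_m:m\in S(G)\}$, where $U_m$ denotes the flat distribution with support of size $m$ and $S(G)=\{\,|dG|:d\in\N\,\}$ is the set of orders of the subgroups $dG$: indeed the image of the word with exponent vector $(a_1,\dots,a_n)$ is $a_1G+\dots+a_nG=dG$ for $d=\gcd(a_1,\dots,a_n)$, and every value of $d$ occurs, e.g.\ from $x_1^d$. Thus from the distributions we recover $S(G)$ as the set of support sizes. Decomposing $G=\bigoplus_p G_p$ into its Sylow subgroups, we have $dG=\bigoplus_p p^{v_p(d)}G_p$, so, via the coprime factorization of integers, $S(G)$ is the direct product of the chains $T_p:=\{\,|p^jG_p|:j\geq 0\,\}$; each $T_p$ is therefore recovered from $S(G)$ as $\{\,p^{v_p(s)}:s\in S(G)\,\}$. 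Finally, writing $G_p\cong\bigoplus_i\Z/p^{e_i}$ with $e_1\geq e_2\geq\cdots$, the difference $\log_p|p^{j-1}G_p|-\log_p|p^jG_p|$ equals $\#\{i:e_i\geq j\}$; letting $j$ run over $1,2,\dots$, these numbers form the conjugate of the partition $(e_1,e_2,\dots)$, which hence determines that partition, so $G_p$, and so $G$.

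The main obstacle is the reverse implication in the abelianness criterion — ruling out that some non-abelian group has all of its $n$-variable word maps flat — and the commutator estimate above is exactly what rules this out; it is also the one place the hypothesis $n>1$ is used, to ensure $[x_1,x_2]$ is among the available word maps. The remaining ingredients — the homomorphism description of word maps on abelian groups and the partition bookkeeping — are routine.
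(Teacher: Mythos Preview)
Your proof is correct and follows the same two-step strategy as the paper: detect abelianness via the ``every distribution is uniform on its support'' criterion, using the commutator word for the non-abelian direction, and then reconstruct an abelian $G$ from the set of image sizes $\{|dG|:d\in\N\}$. The difference is one of packaging: the paper outsources both ingredients to citations --- Ashurst's lemma for the strict inequality $\mu_{G,[x,y]}(1)>\mu_{G,[x,y]}(g)$ when $g\neq 1$, and a theorem of Cocke--Jensen stating that $\{|G|-|G^k|:k\in\N\}$ determines a finite abelian $G$ up to isomorphism --- whereas you give direct, self-contained arguments for both. Your commutator count is essentially a proof of Ashurst's lemma, and your conjugate-partition bookkeeping (reading off $\#\{i:e_i\geq j\}$ from $\log_p|p^{j-1}G_p|-\log_p|p^jG_p|$) is a proof of the relevant case of the Cocke--Jensen result. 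One small slip that does not affect the argument: for fixed $a$, the set $\{b:[a,b]=g\}$ is, when nonempty, a coset of $C_G(a)$ rather than $C_G(g^{-1}a)$; all you actually use is that $a=1$ contributes $|G|$ to the fiber over $1$ and nothing to any fiber over $g\neq 1$, which gives $|c^{-1}(g)|\leq |G|\kappa(G)-|G|<|G|\kappa(G)=|c^{-1}(1)|$.
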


Before giving an outline of the paper, we give the example of all four probability distributions induced by two-variable word maps on the $Q_8$. Note that there are 32 two-variable word maps on $Q_8$, but they induce only 4 probabilty distributions. Moreover, out of the 32 two-variable word maps, there are only 5 automorphism classes of words on $Q_8$.
\begin{align*}
   &\left( \tfrac{1}{8}, \tfrac{1}{8}, \tfrac{1}{8}, \tfrac{1}{8}, \tfrac{1}{8}, \tfrac{1}{8}, \tfrac{1}{8}, \tfrac{1}{8} \right) \\
    &\left(  1, 0, 0, 0, 0, 0, 0, 0 \right)\\
    &\left(  \tfrac{5}{8}, \tfrac{3}{8}, 0, 0, 0, 0, 0, 0 \right)\\
    &\left(  \tfrac{1}{4}, \tfrac{3}{4}, 0, 0, 0, 0, 0, 0 \right).
\end{align*}
A theme of this paper is what type of information can be gained from the sequences of tuples. Note, we are not assuming that the individual elements in the ordering of $G$ are known, nor that one can naturally associate a distribution to a word map, (or for that matter a word map to a word). We do require that the enumeration of $G$ is consistent across the various probability distributions. Clearly the position of the identity element of $G$ can be determined from the ordering. 

There are many open questions regarding the connection between the probability distributions of word maps on a group $G$ and the underlying group $G$. In section 2 we give an answer to a question of Amit and Vishne \cite{Am11}. 

In Section 3 we provide a proof of Theorem \ref{uniform-prev} and in Section 4 we prove Theorems \ref{nilpotent-from-distributions} and \ref{distribution-abelian}.

\section{Probability distributions of word maps}
We will denote the group of all word maps on $d$-variables over a group $G$ as $\textbf{F}_d(G)$. The multiplication is point-wise multiplication, or equivalently, the concatenation of the words inducing the word maps. The groups $\textbf{F}_d(G)$ are very interesting and are examples of reduced free groups. If $G$ is finite then $\textbf{F}_d(G)$ is finite. We will write $\textbf{F}_r$ for the free group of rank $r$. 

Amit and Vishne asked the following two questions \cite{Am11}:

\begin{question}\label{unanswered}
Suppose $N_{w,G}=N_{w',G}$ for every finite group $G$. Does it follow that $w'$ is mapped to $w$ by some automorphism of $\textbf{F}_r$?
\end{question}

\begin{question}\label{answered}
Suppose $N_{w,G}=N_{w',G}$ for a fixed group $G$. Does it follow that $w'$ is mapped to $w$ by some automorphism of $\textbf{F}_r(G)$. 
\end{question}
 
We will answer Question \ref{answered} in the negative below. Work by Puder and Parzanchevski has shown that in certain situations Question \ref{unanswered} is true: if $N_{w,G}$ and $N_{w',G}$ are uniform for all groups $G$, then there is some there is some automorphism of $\textbf{F}_r$ taking $w'$ to $w$ \cite{Pu15}. It is unknown whether Question \ref{unanswered} holds in general. 

Recall that a word $w$ is called a \emph{law on $G$} if it induces the trivial map on $G$. We note the following observation:
\begin{lemma}
Let $G$ be a finite group. A word $w(x,y)=x^k c(x,y)$ where $k\in \mathbb{Z}$ and $c\in F'$, is a law if and only if $x^k$ is a law and $c(x,y)$ is a law.
\end{lemma}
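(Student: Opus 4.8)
The plan is to treat the two implications separately; only one of them requires an idea, and even that is a single well-chosen substitution.

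The reverse implication is immediate: if $x^k$ is a law on $G$ and $c(x,y)$ is a law on $G$, then for every $g,h\in G$ we have $w(g,h)=g^k\,c(g,h)=1\cdot 1=1$, so $w$ is a law on $G$.

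For the forward implication, suppose $w=x^k c$ is a law on $G$. The first step is to recover that $x^k$ is a law by evaluating $w$ with the second variable set to $1$. The point is that the word $c(x,1)$ is trivial: the substitution homomorphism $\textbf{F}_2=\textbf{F}\langle x,y\rangle\to \textbf{F}_1=\textbf{F}\langle x\rangle$ sending $x\mapsto x$, $y\mapsto 1$ carries the derived subgroup $\textbf{F}_2'$ into $\textbf{F}_1'$, and $\textbf{F}_1\cong\Z$ is abelian, so $\textbf{F}_1'=\{1\}$; since $c\in\textbf{F}_2'$, its image $c(x,1)$ is the empty word. Hence $w(g,1)=g^k c(g,1)=g^k$ for all $g\in G$, and because $w$ is a law this forces $g^k=1$ for every $g\in G$, i.e. $x^k$ is a law on $G$. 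The second step is then trivial: knowing $x^k$ is a law, for arbitrary $g,h\in G$ we get $1=w(g,h)=g^k c(g,h)=c(g,h)$, so $c(x,y)$ is a law on $G$ as well.

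I do not expect a real obstacle here; the only point that needs a moment's care is the assertion that $c(x,1)$ reduces to the trivial word, which is exactly the statement that the derived subgroup of the free group of rank one is trivial. (It is worth noting that finiteness of $G$ plays no role in the argument.)
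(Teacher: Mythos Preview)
Your proof is correct and follows essentially the same route as the paper's. The only cosmetic difference is the substitution used to kill $c$: you set $y\mapsto 1$, while the paper sets $y\mapsto x$ (writing ``$w(x,x)$ is a law of $G$''); both work for the identical reason you spell out, namely that the induced map lands in the abelian free group $\textbf{F}_1$, whose derived subgroup is trivial.
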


\begin{proof}
Clearly the product of two laws of $G$ is a law of $G$. If $w(x,y)$ is a law of $G$, then $w(x,x)$ is a law of $G$. Hence $x^k$ and consequently $x^{-k}$ are laws of $G$. Therefore $x^{-k} w(x,y) = c(x,y)$ is a law of $G$. 
\end{proof}

The below example is the \textbf{answer in the negative} to Question \ref{answered} of Amit and Vishne:
\begin{example}
There are word maps over $S(3)$ that induce the same probability distribution on $S(3)$ but are not automorphic. Consider the words $w=x^2$ and $v=[x,yx^2 y^2]$. Over $S(3)$ the words $w$ and $v$ induce the same probability distribution. But, they are not automorphic since if $w$ were automorphic to $v$, then $x^2$ would be in the commutator subgroup. Hence over $S(3)$, $x^2c$ would be a law for some $c\in \textbf{F}'$. But, from the above lemma, this is impossible. 
\end{example}

Even restricting to nilpotent groups, the authors have found that there are 5 automorphism classes of word maps over $Q_8$, but only 4 probability distributions over it. 





\section{Nilpotent groups}\label{nilpotent}
The lemma below was noted by the first author in \cite{Cocke}. It establishes an interesting condition for a group to be nilpotent based solely on a property of the order function. We will use this condition in Theorem \ref{uniform-prev} to construct for a non-nilpotent group $G$ a surjective word $w$ such that $w$ does not induce the uniform distribution on $G$. 

\begin{lemma}\label{qp-lemma}{\cite{Cocke}}
For a prime $p$, a finite group $G$ is not $p$-nilpotent if and only if there are $x,y\in G$, both of order $q^k$ for some prime $q \neq p$ such that the order of $xy$ is $p$ (or the order of $xy$ is either 2 or 4 when $p = 2$.)
\end{lemma}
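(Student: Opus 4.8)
The plan is to deduce this from the classical theory of $p$-nilpotent groups, in particular from the focal subgroup / transfer machinery (Frobenius's normal $p$-complement theorem and its refinements). Recall that $G$ is $p$-nilpotent if and only if $G$ has a normal $p$-complement, equivalently (by Frobenius) if and only if for every $p$-subgroup $P$ the normalizer-controlled fusion condition $N_G(P)$ controls $p$-fusion in a Sylow $p$-subgroup. For the ``if'' direction I would argue by contrapositive: suppose $G$ is $p$-nilpotent, so $G = O_{p'}(G) \rtimes S$ with $S \in \mathrm{Syl}_p(G)$ and $K := O_{p'}(G)$ normal. Given $x, y$ of order $q^k$ with $q \neq p$, both $x$ and $y$ lie in $K$ (any $p'$-element lies in the normal $p$-complement), so $xy \in K$ as well, forcing the order of $xy$ to be coprime to $p$; in particular it cannot equal $p$ (nor $2$ or $4$ when $p = 2$). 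This direction is short.

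The substantive direction is ``only if'': assuming $G$ is not $p$-nilpotent, produce $x, y$ as claimed. The key input is a theorem in the style of the Frobenius normal $p$-complement theorem, or more precisely its converse combined with a minimal counterexample analysis: if $G$ is not $p$-nilpotent, then (passing to a subgroup) there is a $p$-local obstruction, and one can extract a non-trivial element of $p$-power order being conjugated in a controlled way, yielding a product of two $p'$-elements whose order is $p$. Concretely, I would invoke the result (essentially due to Frobenius, sharpened by Wielandt, see also Glauberman) that $G$ has a normal $p$-complement if and only if $N_G(P)/C_G(P)$ is a $p$-group for every $p$-subgroup $P$ — fails, so there is a $p$-subgroup $P$ and a $p'$-element $t \in N_G(P)$ acting nontrivially on $P$. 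Then inside $\langle P, t \rangle$, which is a $p'$-by-$p$ type group, one finds an element $u \in P$ with $t^{-1} u t \neq u$; setting $x = t$ and choosing $y$ to be an appropriate $P$-conjugate of $t^{-1}$ (so that $xy$ lies in $P$ and is nontrivial), we get $x, y$ of $q$-power order with $xy$ of $p$-power order, and then replace $xy$ by a suitable power to make its order exactly $p$ (or $2$ or $4$ when $p = 2$; the $2$-versus-$4$ subtlety is exactly the well-known quirk that the Frobenius-type criterion for $p = 2$ must allow elements of order $4$, because $C_4$ acting on $C_q$ can fail to split off even when $C_2$ would). One must take $t$ and the conjugating element to have the same prime $q$: this can be arranged by replacing $t$ by a suitable power, reducing to a single prime $q \neq p$ dividing $|t|$, and then working inside a $q$-element of $N_G(P)$ acting nontrivially on $P$, which exists because the full $p'$-part of $N_G(P)$ acts nontrivially so some Sylow $q$-subgroup of it does.

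The main obstacle I anticipate is the bookkeeping in the ``only if'' direction ensuring that \emph{both} elements have order a power of the \emph{same} prime $q$, and that after taking powers to adjust orders the product order lands exactly on $p$ (or on $\{2,4\}$). The cleanest route is probably: reduce to the case where $G = Q \rtimes \langle s \rangle$ with $Q$ a $q$-group, $\langle s \rangle$ of order $p$ (or of order $4$ when $p=2$), and $s$ acting without nontrivial fixed points on $Q/\Phi(Q)$ — this is the ``smallest'' non-$p$-nilpotent configuration and such a $G$ embeds as a section of the original group once $p$-nilpotency fails; inside it, pick $0 \neq v$ in an irreducible $\langle s\rangle$-submodule of the Frattini quotient, lift to $u \in Q$, and observe $(u^{-1})(u^{s}) = [u, s^{-1}]$-type elements together with $s$ itself furnish the pair after minor adjustment. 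I would present the argument this way, citing the standard normal $p$-complement theorems (Frobenius, Thompson, Glauberman $Z^*$ as needed) to justify that non-$p$-nilpotency forces the existence of such a section, and then the construction inside the section is an elementary linear-algebra-over-$\mathbb{F}_q$ computation.
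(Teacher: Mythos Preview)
The paper does not prove this lemma at all: it is quoted from \cite{Cocke} and used as a black box, so there is no ``paper's own proof'' to compare against. That said, let me comment on your argument on its merits.

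Your contrapositive for the easy direction is correct and clean: in a $p$-nilpotent group every $p'$-element lies in the normal $p$-complement, so a product of two $q$-power-order elements cannot have order divisible by $p$.

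For the hard direction your overall strategy---invoke the Frobenius criterion to obtain a $p$-subgroup $P$ and a $q$-element $t\in N_G(P)\setminus C_G(P)$, then set $x=t$ and $y=ut^{-1}u^{-1}$ for suitable $u\in P$ so that $xy = {}^t u\cdot u^{-1}\in P\setminus\{1\}$---is sound and is indeed the natural approach. However, two genuine problems remain.

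First, your ``minimal configuration'' is stated with the primes swapped. You write $G=Q\rtimes\langle s\rangle$ with $Q$ a $q$-group and $s$ of order $p$; but in such a group every $q$-element lies in $Q$, so the product of two $q$-elements again lies in $Q$ and cannot have order $p$. The correct local model is $P\rtimes\langle t\rangle$ with $P$ a $p$-group and $t$ a $q$-element acting nontrivially, which is what you had earlier before the reduction. Relatedly, the remark about ``linear algebra over $\mathbb{F}_q$'' on the Frattini quotient is then about the wrong prime.

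Second, and more substantively, you do not close the gap you yourself flag: arranging that ${}^t u\cdot u^{-1}$ has order \emph{exactly} $p$ (or lies in $\{2,4\}$ when $p=2$). Replacing $xy$ by a power destroys the product structure, so that shortcut is unavailable. What is really needed is a statement of the type: a nontrivial $p'$-automorphism of a $p$-group does not centralize all elements of order $p$ (for odd $p$), respectively all elements of order at most $4$ (for $p=2$). This is exactly where the ``$2$ or $4$'' clause comes from (think of the $3$-automorphism of $Q_8$, where every nontrivial commutator $[t,u]$ has order $4$), and it is a genuine theorem about coprime action on $p$-groups, not a bookkeeping matter. Your sketch gestures at this but does not supply it; without it the argument is incomplete.
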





\begin{lemma}\label{us}
Let $N$ be a finite nilpotent group, and $w(\overline{x};\overline{g})$ be a word with parameters $\overline{g} \in N$. Then the following are equivalent:
\begin{enumerate}
\item $w(\overline{x};\overline{g})$ has uniform fiber size over $N$.
\item $w(\overline{x};\overline{g})$ is surjective.
\item The greatest common divisor of the exponents of variables in $\overline{x}$ in $w(\overline{x};\overline{g})$ together with the exponent of $N$ is 1.
\end{enumerate}
\end{lemma}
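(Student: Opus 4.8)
The plan is to establish the cycle $(1) \Rightarrow (2) \Rightarrow (3) \Rightarrow (1)$. The first implication is immediate: since $\sum_{a \in N} |w^{-1}(a)| = |N|^n$, uniform fiber size forces every fiber to have size $|N|^{n-1} > 0$, so in particular $w$ is surjective.

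For $(2) \Rightarrow (3)$, let $e_i \in \Z$ be the exponent sum of $x_i$ in $w(\overline{x};\overline{g})$ and set $d = \gcd_i e_i$. Fix a prime $q$ dividing $|N|$. Writing $N^{\ab}$ additively, the $\mathbb{F}_q$-space $N^{\ab}/qN^{\ab}$ is nonzero, since the nontrivial Sylow $q$-subgroup of the nilpotent group $N$ is a direct factor. The word $w$ induces on this space the affine map $\overline{x} \mapsto c + \sum_i (e_i \bmod q)\,\overline{x}_i$ for some constant $c$, and surjectivity of $w$ on $N$ forces this map to be surjective, hence nonconstant, hence $q \nmid e_i$ for some $i$, i.e.\ $q \nmid d$. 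Since the primes dividing $\exp N$ are exactly those dividing $|N|$, this gives $\gcd(d,\exp N) = 1$.

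For $(3) \Rightarrow (1)$ I would first reduce to $p$-groups. If $N = \prod_p N_p$ is the Sylow decomposition, then the word map of $w$ over $N$ is the product of the corresponding word maps over the $N_p$ (one evaluates $w$ componentwise), so $|w^{-1}(a)| = \prod_p |w^{-1}(a_p)|$ and it suffices to treat each $N_p$; moreover $\gcd(d,\exp N)=1$ forces $p \nmid d$ whenever $p$ divides $|N|$. So assume $N = P$ is a finite $p$-group with $p \nmid d$, and induct on $|P|$, the trivial group being clear. Pick a central subgroup $Z \leq P$ of order $p$, and let $\pi \colon P^n \to (P/Z)^n$ be reduction. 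The induced word $\overline{w}$ over $P/Z$ has the same exponent sums $e_i$, so by induction $\overline{w}$ has uniform fiber size $m$ over $P/Z$. Fix $a \in P$. Every $\overline{h} \in w^{-1}(a)$ has $\pi(\overline{h}) \in \overline{w}^{-1}(\overline{a})$; conversely, given a point of $\overline{w}^{-1}(\overline{a})$ and a lift $\overline{h}^* = (h^*_1,\dots,h^*_n) \in P^n$ of it, every lift has the form $(h^*_1 z_1,\dots,h^*_n z_n)$ with $\overline{z} \in Z^n$, and centrality of the $z_i$ gives
\[
w(h^*_1 z_1,\dots,h^*_n z_n) = w(\overline{h}^*)\cdot \prod_i z_i^{e_i}.
\]
Now $w(\overline{h}^*)^{-1}a \in Z$, and the homomorphism $Z^n \to Z$ sending $\overline{z}$ to $\prod_i z_i^{e_i}$ becomes, under $Z^n \cong (\Z/p)^n$ and $Z \cong \Z/p$, the map $(c_i) \mapsto \sum_i e_i c_i$, which is surjective because $p \nmid d$; hence exactly $p^{n-1}$ lifts lie in $w^{-1}(a)$. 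Summing over the $m$ points of $\overline{w}^{-1}(\overline{a})$ yields $|w^{-1}(a)| = p^{n-1}m$, independent of $a$.

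The main obstacle is this last implication, and within it the displayed identity $w(h^*_1 z_1,\dots,h^*_n z_n) = w(\overline{h}^*)\prod_i z_i^{e_i}$, which reduces the lifting problem to counting solutions of a single linear equation over $\Z/p$; this is where $p \nmid d$ is used, and where nilpotence matters, since it is what lets us strip off a central $\Z/p$ and run the induction. One must also verify the routine points that condition (3) descends to $P/Z$ and to each Sylow subgroup, and that the word map of a direct product is the product of the word maps. The first two implications are straightforward counting arguments.
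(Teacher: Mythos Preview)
Your proof is correct. The cycle $(1)\Rightarrow(2)\Rightarrow(3)$ matches the paper's argument essentially verbatim, but your treatment of $(3)\Rightarrow(1)$ takes a somewhat different route. The paper first normalizes the word: using condition (3) it asserts that without loss of generality $w=x_1\,h\,c$ with $h$ a word in the parameters and $c$ a commutator word, and then inducts on the nilpotency class of $N$, exhibiting for $a^{-1}b\in Z(N)$ the explicit bijection $(x_1,x_2,\ldots)\mapsto(x_1a^{-1}b,x_2,\ldots)$ between the fibers over $a$ and $b$. You instead leave the word untouched, reduce to $p$-groups via the Sylow decomposition, and induct on $|P|$ by stripping off a central $\Z/p$ and counting lifts through the surjective homomorphism $(\Z/p)^n\to\Z/p$, $(c_i)\mapsto\sum e_ic_i$. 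Your central identity $w(h_1^*z_1,\ldots,h_n^*z_n)=w(\overline{h}^*)\prod_i z_i^{e_i}$ is exactly what makes the paper's bijection work once the word is normalized; the difference is that you carry all the exponents $e_i$ through rather than arranging for one of them to equal $1$. The upside of your approach is that it avoids the normalization step, which the paper states as ``without loss of generality'' but which actually requires a short argument (Nielsen moves to reach $x_1^d$ with $d=\gcd_i e_i$, then the bijection $x_1\mapsto x_1^{d'}$ on $N$ for $dd'\equiv 1\pmod{\exp N}$); the cost is the extra reduction to $p$-groups, which the paper does not need.
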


\begin{proof}
$(1)\to(2)$ is obvious.

$(2)\to(3)$: Suppose (2) holds, but the greatest common divisor of the exponents of variables in $\overline{x}$ in $w(\overline{x};\overline{g})$ together with the exponent of $N$ is $d>1$. Let $p$ be a prime divisor of $d$. Then $p$ divides the exponent of the abelianization $\ab(N)$. In $\ab(N)$, the image of $w(\overline{x};\overline{g})$ is a coset of $w(\overline{x};\overline{1})$. However, if $p$ divides the greatest common divisor of the exponents of $\overline{x}$ in $w$, we have that $w(\ab(N);\overline{1}) \subseteq (\ab(N))^p$, which is strictly smaller than $\ab(N)$ since $p$ divides the exponent of $\ab(N)$.

$(3)\to(1)$: Suppose (3) holds, then without loss of generality, we may assume the word has the form $ x_1hc$ where $h$ is a word in $\overline{g}$ and $c$ is a commutator word in the variables $\overline{x}$ and parameters $\overline{g}$. It is clear that this has uniform fiber size over an abelian group. 

We now induct on the nilpotency class of $N$. Let $Z(N)$ be the center of $Z$. By the induction hypothesis, $w(\overline{x};\overline{g})$ has uniform fiber size over $N/Z(N)$, after replacing the parameters by their canonical image. Thus, it suffices to show that for every $a,b \in N$ such that $a^{-1}b\in Z(N)$, $w$ has the same fiber size over $a$ and $b$. However, we have the bijection $(x_1,x_2,x_3,\cdots) \to (x_1a^{-1}b,x_2,x_3,\cdots)$ between the fibers of $a$ and $b$. Indeed, suppose that $w(x_1,x_2,\cdots) = a$. As $a^{-1}b$ is in the center and $c$ is a commutator word, we have $c(x_1a^{-1}b,x_2,\cdots) = c(x_1,x_2,\cdots)$, thus $w(x_1a^{-1}b,x_2,\cdots) = x_1a^{-1}bhc(x_1a^{-1}b,x_2,\cdots) = (a^{-1}b)(x_1hc(x_1,x_2,\cdots)) = a^{-1}ba = b$. The other implication can be established similarly. So $w$ has uniform fiber size over $N$, completing the proof.
\end{proof}

We now prove a slightly stronger version of Theorem \ref{uniform-prev}.
\begin{theorem}\label{uniform-theorem}
Let $G$ be a finite group. Then the following are equivalent:
\begin{enumerate}
\item $G$ is nilpotent.
\item For every surjective word map $w$, the distribution $\mu_{G,w}$ is uniform.
\item There is some $n>1$ such that for every $n$-variable surjective word map $w$, the distribution $\mu_{G,w}$ is uniform.
\end{enumerate}
\end{theorem}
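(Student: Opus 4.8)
The implications $(1)\to(2)$ and $(2)\to(3)$ are the easy directions, so the real content is $(3)\to(1)$, and in fact it suffices to prove the contrapositive: if $G$ is not nilpotent, then for every $n>1$ there is an $n$-variable surjective word map $w$ whose distribution $\mu_{G,w}$ is not uniform.

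\medskip

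For $(1)\to(2)$: if $G$ is nilpotent and $w$ is a surjective word map, apply Lemma~\ref{us} with $N = G$ and no parameters (i.e.\ $\overline{g}$ empty). Surjectivity of $w$ is condition (2) of that lemma, hence condition (1) holds and $w$ has uniform fiber size, i.e.\ $\mu_{G,w}$ is uniform. For $(2)\to(3)$: trivial, take any $n>1$.

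\medskip

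For $(3)\to(1)$, suppose $G$ is not nilpotent. Then $G$ fails to be $p$-nilpotent for some prime $p$, so by Lemma~\ref{qp-lemma} there are elements $x,y\in G$ of order $q^k$ (for some prime $q\neq p$) with $xy$ of order $p$ (or of order $2$ or $4$ when $p=2$). The idea is to build a two-variable word that, on the pair $(x,y)$, outputs an element of order divisible by $p$, while a suitable power trick kills the $q$-part and keeps surjectivity. Concretely I would start from the word $v(a,b) = (ab)^{q^k}$: then $v(x,y) = (xy)^{q^k}$ has order $p$ (since $\gcd(q^k,p)=1$ when $p$ is odd; for $p=2$ one replaces $q^k$ by a suitable odd power so that $(xy)^{q^k}$ still has order $2$), so $v$ is not a law on $G$ and in particular $v(G)\neq\{1\}$ yet $v$ is typically not surjective. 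To repair surjectivity without destroying the relevant behavior, take $w(a,b,\overline{c}) = c_1 \cdot v(a,b)$, or more economically $w(a,b) = a^{N}(ab)^{q^k}$ where $N$ is chosen coprime to $q$ and to the relevant structure — but the cleanest route is: let $e=\exp(G)$, write $e = p^s m$ with $\gcd(p,m)=1$, and consider $w(a,b)=a^m (a^{-1}(ab))^{q^k r}$ for an appropriate $r$; since $\gcd(m, \exp(G))$-type conditions force the first syllable to be surjective on the $p$-part after abelianizing while the commutator-like tail carries a nontrivial value on $(x,y)$. The honest statement of the step is: one exhibits a surjective $w$ on $n$ variables (pad with dummy variables $c_2,\dots$ that appear trivially, or as $c_2c_2^{-1}$, to reach exactly $n$ variables — surjectivity is preserved) such that $w$ restricted to an appropriate coset structure is not fiber-uniform, because the preimage of $1$ has the "wrong" size: on the abelianization $G^{\mathrm{ab}}$ the word behaves uniformly, but the $(x,y)$-witness shows that some fiber in $G$ itself is larger than $|G|^{n}/|G|$, contradicting uniformity.

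\medskip

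\textbf{The main obstacle.} The delicate point is engineering a \emph{surjective} word map that simultaneously witnesses non-uniformity — surjectivity forces the word, up to automorphism, to look like $x_1 \cdot (\text{commutator part})$ after abelianization (cf.\ the proof of Lemma~\ref{us}), and for such words uniformity is automatic over \emph{nilpotent} groups, so the whole difficulty is localizing the failure of nilpotency inside a single word map. I expect the right mechanism is to use Lemma~\ref{qp-lemma} to find a metacyclic-like subquotient $H\le G$ where a $q$-element times a $q$-element lands on a $p$-element, then take a word whose value set, intersected with $H$, is forced by this to have an uneven fiber distribution (the power map $z\mapsto z^{q^k}$ on $H$ is not a bijection and its fibers over $H$ have sizes that cannot be rescaled to uniform), and finally multiply by a fresh variable $x_{1}$ to make the global map surjective while a counting argument (the total preimage of a generator of the relevant $p$-part differs from that of $1$) shows $\mu_{G,w}$ is still non-uniform. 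Making this counting argument airtight — ensuring the $x_1$-multiplication does not smear the fibers back to uniform — is where the care is needed; I would handle it by choosing the padded variables and the power $q^k$ so that the word is constant in the dangerous directions on the $p$-Sylow quotient, reducing the computation to the non-uniform power map on a $q$-group-by-$p$-group piece.
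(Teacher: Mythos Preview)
Your treatment of $(1)\to(2)$ and $(2)\to(3)$ matches the paper, and for $(3)\to(1)$ you correctly isolate Lemma~\ref{qp-lemma} as the key input and correctly identify the contrapositive strategy. But the construction of the word is where the argument lives, and you do not complete it. Your ``economical'' attempt $w(c_1,a,b)=c_1\cdot (ab)^{q^k}$ is surjective, but it is also \emph{uniform}: for each fixed $(a,b)$ there is exactly one $c_1$ solving $c_1(ab)^{q^k}=g$, so every fiber has size $|G|^2$. Your worry that the fresh variable ``smears the fibers back to uniform'' is exactly right, and you never get past it --- the remaining candidates you list ($a^N(ab)^{q^k}$, $a^m(a^{-1}(ab))^{q^kr}$, etc.) come with no verification of either surjectivity or non-uniformity, and the closing paragraph is a sketch of a plan rather than an argument.

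The paper resolves this with a single explicit word. Pick $r,s\in\Z$ with $rp+sq^k=1$ (or $4r+sq^k=1$ when $o(ab)=4$) and set
\[
w(x_1,\dots,x_n)=x_1^{\,sq^k}\,x_2^{\,sq^k}\,(x_1x_2)^{rp}.
\]
Then $w(g,1,\dots)=g^{sq^k+rp}=g$ gives surjectivity; $w(g,g^{-1},\dots)=1$ gives $|G|$ preimages of $1$ among pairs $(x_1,x_2)$; and $w(a,b,\dots)=a^{sq^k}b^{sq^k}(ab)^{rp}=1$ contributes one more (note $b\neq a^{-1}$ since $ab\neq 1$). Hence the fiber over $1$ has size at least $(|G|+1)\,|G|^{n-2}$, so $\mu_{G,w}$ is not uniform. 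The B\'ezout trick is precisely what lets you keep surjectivity without introducing an unconstrained fresh variable; that is the idea your proposal is missing.
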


\begin{proof}

$(1) \to (2)$ We first suppose that $G$ is nilpotent. Then by the previous lemma, if a word map is surjective, then is has uniform fiber size.

$(2) \to (3)$ is obvious.

$(3) \to (1)$ Now suppose that $n > 1$ and every $n$-variable surjective word map on $G$ induces the uniform distribution. We will show $G$ is $p$-nilpotent for every prime $p$. Suppose by way of contradiction that $G$ is not $p$-nilpotent for the prime $p$. Then by Lemma \ref{qp-lemma} above, there are two elements $a,b$ of $G$, such that $o(a)=o(b)=q^k$ and \[  o(ab) \begin{cases}= p & \text{for $p$ an odd prime}\\  \in\{2,4\} & \text{for $p$=2.}\end{cases}\]  Since $p$ and $q$ are coprime there are $r,s \in \mathbb{Z}$ such that $rp+sq^k=1$; (in the event $o(ab)=4$ we will assume that $4r+sq^k=1$). Consider the $n$-variable word \[w(\overline{x})=x_1^{sq^k} x_2^{sq^k} (x_1x_2)^{rp},\] (if necessary let $p=4$). We have the following facts about $w$:

\begin{enumerate}[(a)]
\item For any $g\in G$, we have $w(\overline{x})=g$ if $x_1 = g$ and $x_2 = 1$. 
\item For any $g\in G$, we have $w(\overline{x})=1$ if $x_1 = g$ and $x_2 = g^{-1}$. 
\item If $x_1 = a$ and $x_2 = b$, we have \[w(a,b)=a^{sq^k} b^{sq^k} (ab)^{rp} = 1.\]
\end{enumerate}

By (a), $w$ is surjective. By (b) and (c), there are at least $(|G|+1)\cdot|G|^{n-2}$ tuples in $G^n$ that map to the identity. So $w$ is a surjective word map on $G$ that does not induce the uniform distribution. We conclude that if every $n$-variable surjective word map on $G$ induces the uniform distribution, then $G$ is $p$-nilpotent for every prime $p$, and hence nilpotent.
\end{proof}

\begin{corollary}
There are two word maps over a reduced free group that are homomorphic images of each other, but not related via an automorphism. 
\end{corollary}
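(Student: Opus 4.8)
The plan is to exhibit the pair inside a free group, which is the most transparent reduced free group available. Take $G$ to be any non-abelian free group, so that the reduced free group $\textbf{F}_2(G)$ of $2$-variable word maps over $G$ is just $\textbf{F}_2 = \langle x, y\rangle$ (a non-abelian free group generates the variety of all groups, and on a free group distinct reduced words induce distinct word maps). For the two word maps take $w = x$ and $v = x[x,y]$.

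First I would verify that $w$ and $v$ are homomorphic images of one another. The endomorphism $\phi$ of $\textbf{F}_2$ sending $x \mapsto x[x,y]$ and $y \mapsto y$ has $\phi(w) = v$. In the other direction, the endomorphism $\psi$ sending $x \mapsto x$ and $y \mapsto 1$ kills every commutator (its image $\langle x\rangle$ is abelian), and since $v$ differs from $x$ by the single commutator $[x,y]$ we get $\psi(v) = \psi(x)\,[\psi(x),\psi(y)] = x\,[x,1] = x = w$. As both $\phi$ and $\psi$ are induced by endomorphisms of the free group on the variables, the same computation shows that the images of $w$ and $v$ remain homomorphic images of each other in every reduced free group $\textbf{F}_2(H)$.

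Next I would show no automorphism of $\textbf{F}_2$ carries $w$ to $v$. An automorphism $\alpha$ with $\alpha(w)=v$ would induce an isomorphism between the quotients by the normal closures, $\textbf{F}_2/\langle\langle w\rangle\rangle \cong \textbf{F}_2/\langle\langle v\rangle\rangle$. Now $\textbf{F}_2/\langle\langle x\rangle\rangle \cong \mathbb{Z}$, whereas
\[
\textbf{F}_2/\langle\langle x[x,y]\rangle\rangle = \langle x,y \mid x[x,y]=1\rangle = \langle x,y \mid yxy^{-1}=x^{2}\rangle
\]
is the Baumslag--Solitar group $BS(1,2)$, which is non-abelian and therefore not isomorphic to $\mathbb{Z}$; this contradiction finishes the proof. (Equivalently: $x$ is a primitive element of $\textbf{F}_2$, while $v=x^{2}yx^{-1}y^{-1}$ is Whitehead-minimal of cyclic length $5$ and hence not in the automorphism orbit of $x$.)

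The single point requiring care is this last step --- that adjoining one commutator to the primitive element $x$ already produces something no automorphism can reach. Identifying the normal-closure quotient as a Baumslag--Solitar group makes this immediate, and the rest (the two endomorphisms, and their descent to finite reduced free groups) is routine. The example also pins down the relationship with Questions~\ref{unanswered} and~\ref{answered}: being mutual homomorphic images is strictly weaker than being related by an automorphism, so the Puder--Parzanchevski rigidity cannot be relaxed in that direction.
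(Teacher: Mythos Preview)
Your proof is correct, but it takes a genuinely different route from the paper's. The paper stays in the finite setting: it picks a finite non-nilpotent reduced free group $G$, applies Theorem~\ref{uniform-theorem} to obtain a surjective word map $w$ on $G$ whose distribution is not uniform, and then compares $w$ with the generator $x$. Surjectivity supplies endomorphisms in both directions (in fact the $w$ built in Theorem~\ref{uniform-theorem} satisfies $w(x_1,1,\dots)=x_1$, so one direction is the same collapse $x_2\mapsto 1$ you use), while an automorphism is ruled out because automorphisms of a relatively free group induce bijections of $G^n$ and hence preserve fibre sizes --- but $x$ is uniform and $w$ is not.

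Your argument instead works in the absolutely free group $\textbf{F}_2$, exhibits the concrete pair $x$ and $x[x,y]=x^{2}yx^{-1}y^{-1}$, and separates their automorphism orbits by computing the one-relator quotients $\Z$ versus $BS(1,2)$. This is more elementary and entirely self-contained --- it never touches Theorem~\ref{uniform-theorem} or any distribution argument. The trade-off is that your example lives in an infinite reduced free group, whereas the paper's proof produces examples inside finite reduced free groups, which is the ambient setting of the article and what makes the corollary a genuine consequence of Theorem~\ref{uniform-theorem}. Your parenthetical remark that $\phi$ and $\psi$ descend to every $\textbf{F}_2(H)$ is fine for the homomorphism half, but be aware that the non-automorphism half does not automatically descend: for abelian $H$ your $v$ collapses to $x$, so some extra work (essentially the paper's distribution argument) would be needed to push your example into the finite world.
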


\begin{proof}
Consider a non-nilpotent reduced free group $G$. By the previous theorem, there is a surjective word map $w$ on $G$ such that it does not induce the uniform distribution. However, as it is surjective, there is a homomorphism of $G$ taking $w$ to $x$, and vice versa. Thus $w$ and $x$ are homomorphic images of each other, but not automorphic, since $x$ induces the uniform distribution.
\end{proof}

\begin{corollary}
In a finite nilpotent group $G$, the equation $x=c(x,y)$ where $c\in F'[x,y]$ has exactly $|G|$ solutions; moreover the solution set is exactly the two-tuples in the set $\{(1,g):g\in G$\}.
\end{corollary}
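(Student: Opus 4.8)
The plan is to recognize the equation $x = c(x,y)$ as a statement about a single fiber of a word map and then invoke Lemma \ref{us}. Rewrite $x = c(x,y)$ as $w(x,y) = 1$, where $w(x,y) := x^{-1}c(x,y)$ is a genuine two-variable word (with no parameters). The solution set of the original equation in $G \times G$ is then exactly $w^{-1}(1) \subseteq G^2$, so it suffices to compute $|w^{-1}(1)|$ and to identify this preimage explicitly.

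First I would check that $w$ satisfies condition (3) of Lemma \ref{us}. Since $c \in F'[x,y]$, every variable has exponent sum $0$ in $c$; hence in $w = x^{-1}c$ the variable $x$ has exponent $-1$ and $y$ has exponent $0$. Thus the greatest common divisor of the exponents of the variables together with the exponent of $G$ is $1$. Applying Lemma \ref{us} to the nilpotent group $G$ (with empty parameter tuple), $w$ is surjective and has uniform fiber size over $G^2$. Consequently every fiber of $w$ has size $|G^2|/|w(G^2)| = |G|^2/|G| = |G|$; in particular the equation $x = c(x,y)$ has exactly $|G|$ solutions.

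Next I would pin down which tuples these are. For any $g \in G$, the pair $(1, g)$ is a solution: substituting $x = 1$ amounts to applying the homomorphism from the free group on $x,y$ to the free group on $y$ that kills $x$, and since the free group on $y$ is abelian this homomorphism sends the commutator word $c$ to the identity, so $c(1, g) = 1 = x$. Hence $\{(1, g) : g \in G\} \subseteq w^{-1}(1)$. Since the left-hand set has exactly $|G|$ elements (the map $g \mapsto (1, g)$ is injective) and the right-hand set also has exactly $|G|$ elements, the two sets coincide, which is the claim.

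There is no serious obstacle here: essentially all the content of the corollary is carried by Lemma \ref{us}, and the only things to verify directly are the exponent-gcd condition and the elementary fact that $c$ becomes trivial when $x$ is set to $1$. The one point that deserves a moment's care is making sure Lemma \ref{us} is invoked in the allowed generality, namely with no parameters and with one variable already occurring with exponent $\pm 1$, so that the normal-form reduction used in the proof of $(3)\to(1)$ applies without any extra work.
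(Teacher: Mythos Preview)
Your proposal is correct and follows essentially the same route as the paper: rewrite the equation as $w(x,y)=1$ with $w=x^{-1}c(x,y)$, use nilpotency of $G$ together with the surjectivity/uniform-fiber result (Lemma~\ref{us}) to count exactly $|G|$ solutions, and then exhibit the $|G|$ obvious solutions $(1,g)$ to pin down the solution set. The only difference is cosmetic: you invoke Lemma~\ref{us} via condition~(3) and spell out the exponent-gcd check and the reason $c(1,g)=1$, whereas the paper simply asserts that $w$ is surjective and that $(1,g)$ is a solution.
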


\begin{proof}
We note that a solution $(a,b)$ to $x=c(x,y)$ is also a solution to $w(x,y)=1$ where $w=x^{-1}c(x,y)$. Since $G$ is nilpotent and $w$ is surjective, we see that there are exactly $|G|$ such solutions. Clearly, $(1,g)$ is a solution for all $g\in G$. 
\end{proof}

The above corollary can easily be generalized to the following:
\begin{corollary}\label{not-Alon}
In a finite nilpotent group $G$, the equation $w(\overline{x})=c(\overline{x}),$ where $c\in \textbf{F}'(G)[\overline{x}]$ and $w$ is a surjective word map on $G,$ has exactly $|G|^{|\overline{x}|-1}$ solutions. 
\end{corollary}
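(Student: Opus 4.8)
The plan is to reduce Corollary~\ref{not-Alon} to the surjectivity statement already proved in Theorem~\ref{uniform-theorem}, exactly as in the preceding corollary but carried out for a general number of variables. First I would rewrite the equation $w(\overline{x}) = c(\overline{x})$ as $c(\overline{x})^{-1} w(\overline{x}) = 1$, and set $u(\overline{x}) = c(\overline{x})^{-1} w(\overline{x}) \in \textbf{F}_{|\overline{x}|}(G)$. The solution set of the original equation is then exactly $u^{-1}(1)$, so the claim is that $|u^{-1}(1)| = |G|^{|\overline{x}|-1}$, i.e.\ that $\mu_{G,u}(1) = 1/|G|$.

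Next I would observe that $u$ is a surjective word map on $G$. This is the one point that needs a short argument: since $w$ is surjective and $c \in \textbf{F}'(G)[\overline{x}]$ lies in the commutator subgroup of the reduced free group, the image of $u$ in the abelianization $\textbf{F}_{|\overline{x}|}(G)^{\ab}$ agrees with the image of $w$, which is onto because $w$ is onto. Concretely, one can write $w$ in the normal form $x_1 h c_0$ with $c_0$ a commutator word (as in the proof of Lemma~\ref{us}), so $u = c^{-1} x_1 h c_0$, and since both $c$ and $c_0$ are commutator words, $u$ has the same exponent-sum data on the variables as $w$; by Lemma~\ref{us}, condition (3) passes from $w$ to $u$, hence $u$ is surjective on $G$ (here one is using that $G$ is nilpotent, so surjectivity is detected by the gcd condition). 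Alternatively, and more cleanly, multiplying by a word in $\textbf{F}'(G)$ does not change surjectivity of a word map on a nilpotent group, again by Lemma~\ref{us}.

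Then I would invoke Theorem~\ref{uniform-theorem}: since $G$ is nilpotent and $u$ is a surjective word map on $G$, the distribution $\mu_{G,u}$ is uniform, so every fiber of $u$ has size $|G|^{|\overline{x}|}/|G| = |G|^{|\overline{x}|-1}$. In particular $|u^{-1}(1)| = |G|^{|\overline{x}|-1}$, which is exactly the number of solutions of $w(\overline{x}) = c(\overline{x})$. This completes the proof.

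The only real obstacle is the surjectivity of $u$, and more precisely making sure the argument ``$c \in \textbf{F}'(G)[\overline{x}]$ so $u$ is surjective whenever $w$ is'' is airtight: one must be slightly careful that the parameters implicit in $c$ (if any) do not interfere, but since multiplying by a commutator word changes neither the exponent-sums of the variables nor the exponent of $N$, Lemma~\ref{us}(3) is preserved, and on a nilpotent group that is equivalent to surjectivity. Everything else is a direct application of the uniformity theorem.
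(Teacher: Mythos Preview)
Your proof is correct and slightly more direct than the paper's. The paper first invokes an automorphism $\sigma$ of the reduced free group $\textbf{F}_{|\overline{x}|}(G)$ carrying $w$ to $x_1$, applies $\sigma$ to $w^{-1}c$ to obtain $x_1^{-1}\sigma(c)$, observes that this is surjective (essentially by the exponent-sum criterion, since $\sigma(c)$ is still in the commutator subgroup), and then counts solutions and pulls back along $\sigma$. You bypass the automorphism entirely: since $c$ is a commutator word its exponent sums vanish, so $u=c^{-1}w$ satisfies condition~(3) of Lemma~\ref{us} exactly when $w$ does, hence $u$ is surjective with uniform fibers and you are done. Your route avoids having to justify the existence of $\sigma$ (which the paper attributes to Theorem~\ref{uniform-theorem}, though that theorem does not state it explicitly); the paper's route, on the other hand, makes the reduction to the previous corollary more visible. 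Your aside about ``parameters implicit in $c$'' is unnecessary here, as the corollary concerns ordinary word maps without parameters.
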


\begin{proof}
By Theorem \ref{uniform-theorem}, there is an automorphism $\sigma$ of $G$ that takes $w$ to $x_1$. Then $\sigma(w^{-1}c)=x_1^{-1}\sigma(c)$ and we conclude that $x_1^{-1}\sigma(c)$ is a surjective word map on $G$. Hence, there are exactly $|G|^{|\overline{x}|-1}$ solutions to $x_1^{-1}\sigma(c)$ and these are in bijection (via $\sigma$) with the solutions to $w(\overline{x})=c(\overline{x})$. 
\end{proof}

It is natural to ask about the generalization of Corollary \ref{not-Alon} to the equation $w(\overline{x})=v(\overline{x})$, without any restriction on $w$ or $v$, which is equivalent to considering the equation $w(\overline{x})=1$. Amit conjectured the following in unpublished work \cite{Am} (see also \cite[Question 24]{Ni11}):

\begin{conjecture}[Amit]
For every word map $w(\overline{x})$ on a finite nilpotent group $G$, \[\mu_{G,w}(1)\geq \frac{1}{|G|},\] i.e., the number of solutions to $w(\overline{x}) = 1$ is greater than or equal to $|G|^{|\overline{x}|-1}$.
\end{conjecture}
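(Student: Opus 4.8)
The plan is to reduce to finite $p$-groups --- using that a nilpotent group is the direct product of its Sylow subgroups and that a group word is evaluated coordinatewise on a direct product, so that with $k=|\overline{x}|$ the number of solutions of $w(\overline{x})=1$ over $G$ is the product of the numbers of solutions over the Sylow subgroups, while $|G|^{k-1}$ factors the same way --- and then to prove the bound for $p$-groups by induction on the order.

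So fix a finite $p$-group $G$ and induct on $|G|$. If $w$ is surjective on $G$, then Corollary \ref{not-Alon}, applied with $c$ the trivial word, shows $w=1$ has \emph{exactly} $|G|^{k-1}$ solutions. If $G$ is abelian, then $w\colon G^{k}\to G$ is a group homomorphism, so $|\Ker w|=|G|^{k}/|w(G)|\ge|G|^{k-1}$. In the remaining case $G$ is non-abelian and $w$ is not surjective on $G$, so by Lemma \ref{us} every variable $x_{i}$ occurs in $w$ with exponent sum $e_{i}$ divisible by $p$.

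In that case, choose a central subgroup $Z\le Z(G)$ of order $p$. Since each $e_{i}$ is divisible by $p$ and $Z$ has exponent $p$, collecting central factors gives $w(\overline{g}\,\overline{z})=w(\overline{g})\prod_{i}z_{i}^{e_{i}}=w(\overline{g})$ for all $\overline{g}\in G^{k}$, $\overline{z}\in Z^{k}$. Thus $w$ is constant on $Z^{k}$-cosets and descends to $\widetilde{w}\colon(G/Z)^{k}\to G$, and
\[
\#\{\overline{g}\in G^{k}:w(\overline{g})=1\}=p^{k}\cdot\#\{\overline{h}\in(G/Z)^{k}:\widetilde{w}(\overline{h})=1\}.
\]
The exponent sums of $w$ over the smaller $p$-group $G/Z$ are unchanged, so by induction the set $S=\{\overline{h}\in(G/Z)^{k}:\widetilde{w}(\overline{h})\in Z\}$ --- which is precisely the solution set of $w=1$ over $G/Z$ --- satisfies $|S|\ge|G/Z|^{k-1}$, and $\widetilde{w}$ restricts to a map $S\to Z\cong\Z/p$. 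The bound now reduces to the single claim that at least $|S|/p$ elements of $S$ are sent to $1\in Z$, for then $\#\{w=1\text{ over }G\}\ge p^{k}\cdot\tfrac{1}{p}|G/Z|^{k-1}=|G|^{k-1}$.

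\textbf{The main obstacle} is exactly this claim: the ``residue map'' $S\to\Z/p$, $\overline{h}\mapsto\widetilde{w}(\overline{h})$, must be shown not to concentrate away from $0$. I would attack it by harmonic analysis on $\Z/p$, writing $\#\{\overline{h}\in S:\widetilde{w}(\overline{h})=1\}=\tfrac{1}{p}\sum_{\psi\in\widehat{Z}}\sum_{\overline{h}\in S}\psi(\widetilde{w}(\overline{h}))$; the trivial character already contributes $\tfrac{1}{p}|S|\ge\tfrac{1}{p}|G/Z|^{k-1}$, so it would be enough to show the remaining $p-1$ character sums contribute a nonnegative total. That is a positivity statement for character-twisted solution counts, of the same flavor as Amit's conjecture but one layer down the lower central series, and establishing it seems to require a real understanding of how $w$ interacts with $[G,G]$ and with the irreducible characters of $G$. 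Variants worth trying are to choose $Z$ adapted to $w$ (e.g.\ inside $\gamma_{c}(G)$, with $c$ the nilpotency class) rather than arbitrarily, or to strengthen the inductive hypothesis to a claim about the entire fiber-size vector that is stable under central extensions by $\Z/p$. Note that the computation above already settles Amit's conjecture for every word that is surjective on the relevant $p$-groups, so the genuine difficulty is confined to this degenerate case.
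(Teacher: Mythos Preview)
The paper does \emph{not} prove this statement: it is presented there as Amit's open conjecture, with only partial results cited (Levy for nilpotency class~$2$, I\~{n}iguez--Sangroniz, and Klyachko--Mkrtchyan for two variables). There is no proof in the paper to compare against.

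Your reduction to $p$-groups and your handling of the surjective case via Corollary~\ref{not-Alon} (equivalently Theorem~\ref{uniform-theorem}) are correct, as is the central-translate identity $w(\overline{g}\,\overline{z})=w(\overline{g})\prod_i z_i^{e_i}$ and the resulting descent to $(G/Z)^k$. The obstacle you yourself isolate, however, is not a technicality but the heart of the matter, and it is exactly why the conjecture remains open. The assertion that at least $|S|/p$ elements of $S$ map to $1\in Z$ does not follow from anything you have established, and the character-sum positivity you propose --- nonnegativity of $\sum_{\psi\neq 1}\sum_{\overline{h}\in S}\psi(\widetilde{w}(\overline{h}))$ --- is itself not known and does not visibly reduce the difficulty. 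Inductive schemes of this shape stall precisely here: passing to $G/Z$ trades the equation $w=1$ for the problem of controlling which of the $p$ central values $w$ hits, and nothing forces those fibers to be balanced. Your closing remark that the surjective case is already settled is correct but is exactly the content of Lemma~\ref{us} and Theorem~\ref{uniform-theorem} in the paper; the entire substance of Amit's conjecture lies in the non-surjective case, which your proposal leaves open.
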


There are only some partial results in this direction. Levy has shown that when $G$ has nilpotent class 2, then for any word $w$ we have $\mu_{G,w}(1)\geq \frac{1}{|G|}$ \cite{Levy}; showing that Amit's Conjecture holds for class 2 groups. I{\~{n}}iguez and Sangroniz have shown the stronger condition that for free $p$-groups of nilpotency class 2 and exponent 2, it is true that $\mu_{G,w}(g)\geq \frac{1}{|G|}$ \cite{IS}. Klyachko and Mkrtchyan \cite{Kl14} considered first-order formula in any finite group, which implies $\mu_{G,w}(1)\geq \frac{1}{|G|}$ when $w$ has only 2 variables.

\section{Information content of distributions}

In this section we are interested in understanding the information content of the distributions of word maps of a group. Recall that we are interested in the following question:

\begin{question}
Fix $n\in \N$, a finite group $G$, and an enumeration of the elements of $G$. Let $g_i$ be the $i$-th element of $G$. Consider the probability distribution of the word map $w$ as a function $f_w: |G| \to \N$ where $f_w(i) = |w^{-1}(g_i)|$. Given the distributions of all $n$-variable word maps of $G$ as a set, what information can be recovered about $G$?
\end{question}

A priori, the answer of the question depends on $n$. We ask:

\begin{question}
Do we get more information as $n$ gets larger?
\end{question}

From the distributions of word maps we can easily read off the size of the group. Moreover, we can identify the identity element in $G$ as it is the image of the only word map (the identity map) that has an image of size 1.

We mention the following example:
\begin{example}
$D_8$ and $Q_8$ have the same reduced free group on two variables, i.e., \[\textbf{F}_2(D_8)=\textbf{F}_2(Q_8)=\text{SmallGroup(32,2)}.\] However using Magma \cite{Magma} we find that they have different sets of distributions of 2-variable word maps.
\end{example}

\subsection{$n = 1$: words with a single variable}

When $n=1$, the images of the word maps are exactly the sets $G^k = \{g^k \mid g\in G\}$.

\begin{example}\label{n=1ex}
The distribution of 1-variable word maps does not determine even nilpotent groups up to isomorphism. Consider the extra-special groups of exponent $p$. When looking at word maps on $1$-variable, they are indistinguishable from the elementary abelian groups of the same order. 
\end{example}

In order to prove Theorem \ref{nilpotent-from-distributions} we will show that the distribution of $1$-variable word maps on $G$ determines whether or not $G$ is nilpotent; to do this we need the following theorem originally conjectured by Frobenius and proven by Iliyori and Yamaki \cite{IY}:

\begin{theorem}[Frobenius Solutions Theorem]
Let $G$ be a finite group and let $d$ divide the exponenet of $G$. Let $X(d)=\{x\in G: x^d =1\}.$ Then $d$ divides $|X(d)|$ and if $|X(d)|=d$ then $X(d)$ is a normal subgroup of $G$. 
\end{theorem}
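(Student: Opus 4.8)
The plan is to separate the two assertions, which are of very different depths. The divisibility claim --- that $d$ divides $|X(d)|$ --- is Frobenius's classical theorem (in fact it holds for every $d$ dividing $|G|$, not just $d$ dividing the exponent), and I would prove it by the usual elementary argument: induct on $d$, write $|X(d)| = \sum_{e \mid d}\psi(e)$ where $\psi(e)$ is the number of elements of $G$ of exact order $e$, use the inductive hypothesis to control $\sum_{e \mid d,\, e < d}\psi(e)$, and reduce the remaining congruence, one prime power at a time, to counting elements inside cyclic subgroups via Sylow's theorem together with the fact that a cyclic group of order $e$ has $\varphi(e)$ generators. I expect no obstacle here; this part is textbook.

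The second assertion --- that $|X(d)| = d$ forces $X(d)$ to be a subgroup --- is the Frobenius conjecture itself, and here the honest plan is to invoke the classification of finite simple groups, following Iiyori and Yamaki \cite{IY}. The strategy is a minimal-counterexample argument. Suppose $G$ has least order among groups possessing a divisor $d$ of its exponent with $|X(d)| = d$ but $X(d)$ not a subgroup. One first extracts structural constraints from minimality --- pushing the obstruction into a situation where $G$ is almost simple, governed by a single nonabelian simple composition factor --- and the heart of the matter is then a case-by-case check, over the list of finite simple groups, that no such configuration arises. This is where CFSG is genuinely needed, and where essentially all the difficulty lies: earlier work (Zemlin, Rust, Iiyori, and others) had settled many special cases, but the general statement only became a theorem after the classification. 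This is the step I expect to be the main obstacle, and it is the reason the result is quoted rather than reproved.

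The one point worth recording explicitly --- and the only thing we need beyond the cited result --- is that $X(d)$ is always closed under conjugation, since $(gxg^{-1})^d = g x^d g^{-1}$, so as soon as $X(d)$ is known to be a subgroup it is automatically normal. Since in our application (Theorem \ref{nilpotent-from-distributions}) this theorem is used purely as a black box, to convert information about the sizes of the sets of $k$-th powers $G^k = \{g^k : g \in G\}$ into the existence of normal subgroups, I would not reproduce the CFSG-dependent argument and would simply cite \cite{IY}.
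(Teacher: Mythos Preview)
Your proposal is correct and aligns with the paper's treatment: the paper does not prove this theorem at all but simply states it and cites Iiyori and Yamaki \cite{IY}, exactly as you conclude one should do. Your additional commentary --- separating the elementary Frobenius divisibility statement from the CFSG-dependent subgroup conclusion, and noting that normality is automatic from conjugation-invariance of $X(d)$ --- is accurate and goes beyond what the paper records, but the bottom line is the same.
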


Using the Frobenius Solutions Theorem we can now show:

\begin{theorem}\label{n=1nil}
The distribution of 1-variable word maps on a finite group $G$ determines whether or not $G$ is nilpotent. 
\end{theorem}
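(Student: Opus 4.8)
\textit{Proof proposal.} The plan is to reduce nilpotency to a statement about the sizes of the fibers over the identity of the single‑variable word maps $x\mapsto x^{d}$, and then to argue that those sizes are visible in the given data. From the set of $1$‑variable distributions we already recover $|G|$ and the position $i_0$ of the identity (it is the unique coordinate carrying the value $|G|$, which occurs only in the distribution of the trivial word map $x\mapsto 1$). Factor $|G|=\prod_{p}p^{a_p}$. For each prime $p\mid|G|$ put $X(p^{a_p})=\{x\in G:x^{p^{a_p}}=1\}$; an element lies in this set exactly when its order is a power of $p$ (by Lagrange the order of any $p$‑element divides $p^{a_p}$), so $X(p^{a_p})$ is precisely the set of $p$‑elements of $G$, and $|X(p^{a_p})|$ is the value at $i_0$ of the distribution of $x\mapsto x^{p^{a_p}}$.

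The structural heart of the proof is the equivalence
\[
G \text{ is nilpotent} \iff |X(p^{a_p})|=p^{a_p} \text{ for every prime } p\mid |G|.
\]
For ($\Rightarrow$): a nilpotent $G$ is the internal direct product of its Sylow subgroups $P_p$, so its $p$‑elements are exactly $P_p$ and $|X(p^{a_p})|=|P_p|=p^{a_p}$. For ($\Leftarrow$): given a Sylow $p$‑subgroup $P$, every element of $P$ is a $p$‑element, so $P\subseteq X(p^{a_p})$ and hence $p^{a_p}=|P|\le|X(p^{a_p})|$; equality forces $X(p^{a_p})=P$, so $P$ contains every $p$‑element and therefore every Sylow $p$‑subgroup, making it the unique, hence normal, Sylow $p$‑subgroup; a finite group with a normal Sylow subgroup at every prime is nilpotent. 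The Frobenius Solutions Theorem enters as the quantitative backbone here: since $X(p^{a_p})$ also equals $X(p^{b})$ for $p^{b}$ the $p$‑part of $\exp(G)$, and $p^{b}\mid\exp(G)$, the theorem gives $p^{b}\mid|X(p^{a_p})|$, which certifies that this count is rigid near its minimum value $p^{a_p}$ and is the lever for the extraction step below.

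The step I expect to be the main obstacle is the formalization: the distributions are handed to us only as an \emph{unlabeled} set, so we must show that the numbers $|X(p^{a_p})|$ can be read off without knowing which distribution arises from which power $x^{d}$. The natural route is to reconstruct, from the supports of the distributions, the divisor lattice of $\exp(G)$: if $d_1\mid d_2$ then $G^{d_2}\subseteq G^{d_1}$, the full support occurs exactly for the bijective maps $x\mapsto x^{d}$ with $\gcd(d,\exp G)=1$ and the singleton support $\{1\}$ occurs exactly for $x\mapsto x^{\exp(G)}$, and one tries to show that the containment poset of the sets $G^{d}$ (for $d\mid\exp(G)$) is isomorphic to the divisor lattice of $\exp(G)$ via $G^{d}\leftrightarrow d$. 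Granting this, every distribution acquires a well‑defined exponent $d$, so each $|X(d)|$ — in particular each $|X(p^{a_p})|$ — is recovered, and the equivalence above decides nilpotency; the Frobenius divisibility is what one uses to resolve any remaining ties. Establishing this lattice reconstruction (or some substitute that still pins down the counts $|X(p^{a_p})|$) is the delicate point; the group theory itself is routine.
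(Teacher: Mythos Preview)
Your group-theoretic reduction is sound: $G$ is nilpotent iff $|X(p^{a_p})|=p^{a_p}$ for every prime $p\mid|G|$. The gap is exactly where you flag it, and it is genuine. Even granting that $d\mapsto G^d$ is injective on divisors of $\exp(G)$ and that the containment poset of the sets $G^d$ is anti-isomorphic to the divisor lattice of $\exp(G)$ (neither of which you prove), that lattice has nontrivial automorphisms whenever two distinct primes occur to the same power in $\exp(G)$. From the poset alone you cannot tell which coatom corresponds to $p$ and which to $q$, so you cannot attach the label $p^{a_p}$ to a specific distribution and read off $|X(p^{a_p})|$. Invoking Frobenius divisibility to break the symmetry would itself require a further argument you have not supplied, so as written the extraction step does not go through.

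The paper sidesteps the labeling problem entirely. Rather than trying to locate the distribution of $x^{p^{a_p}}$, it asks, for each prime $p\mid|G|$ with $|G|=p^{k}m$ and $\gcd(p,m)=1$, whether \emph{some} distribution in the set is uniform on an image of size $m$ with every nonzero fiber equal to $p^{k}$; this shape is visible directly in the unlabeled data. If $G$ is nilpotent then $x^{p^{k}}$ realizes it. Conversely, if any $w=x^{d}$ has this shape, then $|X(d)|=p^{k}$ forces $\gcd(d,\exp G)$ to be a $p$-power by the Frobenius Solutions Theorem; elements of $X(m)$ have order coprime to $p$ and hence lie in $w(G)$, and $|X(m)|\ge m=|w(G)|$ gives $w(G)=X(m)$. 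Thus $w(G)$ contains no nontrivial $p$-element, so every $p$-element lies in the fiber $X(d)$ of size $p^{k}$, forcing a unique (normal) Sylow $p$-subgroup. No reconstruction of which exponent produced which distribution is needed.
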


\begin{proof}
We first note, that the identity element is always determined by the set of distributions, i.e., the only element for which there is a distribution mapping entirely onto it. Moreover, we can always identify $|G|$ from the set of distributions, namely by looking for a uniform distribution. 

 Let $|G|=p^km$ where $\gcd(p,m)=1$ and $k \ge 1$. Then, if $G$ is nilpotent, there are exactly $p^k$ solutions to the equation $x^{p^k}=1$. Moreover, letting $w=x^{p^k}$, we see that for every $g\in w(G)$ there are exactly $p^k$ preimages in $G$ and $|w(G)|=m$. 

Now suppose $G$ is a group of order $p^km$, where $(p,m)=1$, and $w=x^d$ such that the following hold:
\begin{itemize}
\item For every $g\in w(G)$ there are exactly $p^k$ preimages in $G$.
\item $|w(G)|=m$.
\end{itemize}
We claim $G$ must be nilpotent. We first note that by the Frobenius Solutions Theorem the number $d$ is a $p$-th power. 

Let $X(m)$ be the solutions to the equation $x^m = 1$. Also by the Frobenius Solutions Theorem, $|X(m)| \ge m$. But every element of $X(m)$ is a solution to $x^m = 1$, hence they have order coprime to $p$. Since $w=x^{p^j}$ for some $j\ge 1$ and $\gcd(p^j,m) = 1$, the elements in $X(m)$ must also be in $w(G)$. But, $|w(G)|=m$ and we conclude that $w(G) = X(m)$ and contains no elements of order $p$. Hence every element whose order is a power of $p$ is a solution to $w$. Thus, $G$ has a normal Sylow $p$-subgroup. Then $G$ is nilpotent if and only if there is such a $w$ for all $p$ dividing $|G|$. 
\end{proof}

\subsection{$n >1$: words with more than one variable}

From Theorem \ref{uniform-prev}, we see that for $n>1$, the set of all distributions of $n$-variable word maps on $G$ is enough to determine whether or not $G$ is nilpotent, i.e., a finite group $G$ is not nilpotent if and only if there is some $n$-variable surjective map that is not uniform. This, together with Theorem \ref{n=1nil}, proves Theorem \ref{nilpotent-from-distributions}, which we restate here:

\begin{theorem}
For all $n\in \N$, we can identify when a finite group $G$ is nilpotent from the set of distributions of all $n$-variable word maps on $G$.
\end{theorem}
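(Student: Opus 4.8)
The plan is to reduce the statement to the two results already in hand, treating $n=1$ and $n>1$ separately, since the underlying mechanism is genuinely different in the two cases. For $n>1$ I would invoke Theorem~\ref{uniform-prev}, and for $n=1$ I would invoke Theorem~\ref{n=1nil}. The only content to add is the observation that the hypotheses appearing in those theorems — ``every surjective word map is uniform'' in the first, and a Frobenius-style solution count in the second — are properties that can be read off purely from the \emph{set} of functions $\{f_w\}$, with no knowledge of which function comes from which word, nor of which index $i$ names which element of $G$. Throughout, recall (as already noted) that $|G|$ is visible, since it is the size of the common domain of the $f_w$, as is any arithmetic invariant of $|G|$ such as its prime divisors.

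For the case $n>1$: a word map $w$ is surjective precisely when $f_w(i)>0$ for every $i\in\{1,\dots,|G|\}$, and the distribution $\mu_{G,w}$ is uniform precisely when $f_w$ is constant (necessarily with value $|G|^{n-1}$). Both conditions are visibly decidable from the set $\{f_w\}$ alone. Hence from this set we can decide whether \emph{every} $w$ whose $f_w$ is everywhere positive has $f_w$ constant; by Theorem~\ref{uniform-prev} this happens if and only if $G$ is nilpotent, which settles the theorem for $n>1$.

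For the case $n=1$: the criterion above no longer works — for instance the only surjective one-variable word maps on $S(3)$ are $x\mapsto x$ and $x\mapsto x^{-1}$, both of which induce the uniform distribution, yet $S(3)$ is not nilpotent — so we instead extract the information used in the proof of Theorem~\ref{n=1nil}. Writing $|G|=p^k m$ with $\gcd(p,m)=1$, one scans the list for a word $w=x^d$ whose distribution $f_w$ has image of size $m$ with every nonzero value equal to $p^k$; by the Frobenius Solutions Theorem this occurs if and only if $G$ has a normal Sylow $p$-subgroup. Running this test over all prime divisors $p$ of $|G|$ decides whether every Sylow subgroup of $G$ is normal, i.e.\ whether $G$ is nilpotent. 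Again each ingredient used — $|G|$, its prime divisors, the size of the image of $f_w$, and the multiset of its nonzero values — is determined by the set of distributions.

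The main obstacle is really internal to the cited Theorem~\ref{n=1nil}, that is, to the $n=1$ case: the surjective/uniform dichotomy driving $n>1$ has no analogue for a single variable, so one is forced to reason about the counting function $d\mapsto |\{x\in G: x^d=1\}|$, and the structural input needed (that $|X(d)|=d$ forces $X(d)$ to be normal) is exactly the Frobenius conjecture of Iiyori and Yamaki, whose proof relies on the classification of finite simple groups. Granting that input, the present theorem is just the bookkeeping described above: split on $n$, and in each case verify that the relevant hypothesis is a function of the bare set of distributions.
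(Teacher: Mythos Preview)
Your proposal is correct and follows exactly the paper's route: split on $n$, invoke Theorem~\ref{uniform-prev} (in the sharpened form of Theorem~\ref{uniform-theorem}) for $n>1$ and Theorem~\ref{n=1nil} for $n=1$, and observe that the hypotheses in each are visible from the bare set of distributions. One small overstatement: the per-prime claim ``such a $w$ exists if and only if $G$ has a normal Sylow $p$-subgroup'' fails in the reverse direction (take $G=S_3$, $p=3$: the Sylow $3$-subgroup is normal, yet no $x^d$ has image of size $2$ with uniform fibers of size $3$), but your argument only needs the forward implication together with the fact that the test passes for every $p$ when $G$ is nilpotent, so the conclusion is unaffected.
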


Interestingly enough the set of all distributions of $n$-variable word maps can also identify abelianess:

\begin{lemma}
For any $n>1$, a finite group $G$ is abelian if and only if the distribution of every $n$-variable word map is uniform over its image.
\end{lemma}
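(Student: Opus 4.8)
The plan is to prove both directions, with the forward direction being essentially immediate and the reverse direction requiring a genuine construction. For the forward direction: if $G$ is abelian, then for any word $w(\overline{x})$ we may collect the total exponent $d_i$ of each variable $x_i$; the image of $w$ is the subgroup $\langle g^{d} : g \in G\rangle$ where $d = \gcd(d_1,\dots,d_n)$ (together with whatever parameter contribution there is, but for a pure word there are no parameters), and a standard counting argument — or simply the fact that $x_1^{d_1}\cdots x_n^{d_n}$ is a homomorphism $G^n \to G$ — shows every fiber over the image has the same size $|G|^n / |w(G)|$. Concretely, after a linear change of variables over $\Z$ one can write the exponent-sum vector as $(d,0,\dots,0)$, reducing to the map $x_1 \mapsto x_1^d$, which has uniform fibers because it is a group homomorphism; this already appeared in spirit in the proof of Lemma \ref{us} for nilpotent groups.

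For the reverse direction, I would prove the contrapositive: if $G$ is non-abelian, exhibit an $n$-variable word map whose distribution is not uniform over its image. Since $n > 1$, the commutator word $w(x_1,x_2) = [x_1,x_2]$ (padded with dummy variables $x_3,\dots,x_n$ that do not appear, so it counts as an $n$-variable word) is available. Its image is the set $K$ of commutators, which is nonempty and contains $1$. The key point is that the number of pairs $(a,b)$ with $[a,b] = 1$ is $\sum_{a \in G} |C_G(a)| = |G| \cdot k(G)$ where $k(G)$ is the number of conjugacy classes, so $\mu_{G,w}(1) = k(G)/|G|$. If this distribution were uniform over its image, then every commutator would have exactly $|G| \cdot k(G)$ preimage pairs, and summing over the image gives $|G|^2 = |K| \cdot |G| \cdot k(G)$, i.e. $|K| \cdot k(G) = |G|$. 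One then argues this forces $G$ to be abelian: for a non-abelian group $k(G) < |G|$ so $|K| > 1$, but one can show $|K| \cdot k(G) = |G|$ is impossible — for instance, if $w = [x_1,x_2]$ is surjective onto $K$ with uniform fibers then in particular $1$ and any nontrivial commutator have equal fiber size, yet $1$'s fiber contains all $|G| \cdot k(G)$ commuting pairs while a fixed nontrivial $c \in K$ has fiber size at most... — here a cleaner route is to instead use that a uniform distribution over the image would make $w$ behave like a "balanced" map and directly compare the fiber over $1$ with a generic fiber, or simply invoke that Theorem \ref{uniform-theorem} already handles the nilpotent non-abelian case, reducing us to non-nilpotent $G$ where a non-uniform surjective word exists by Theorem \ref{uniform-prev}, and then patch the nilpotent-but-non-abelian case separately with the commutator word.

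The main obstacle I anticipate is the nilpotent-but-non-abelian case: Theorem \ref{uniform-prev} only gives non-uniformity for non-nilpotent groups, so for groups like $Q_8$ or the Heisenberg group one really does need the commutator word (or another non-surjective word) and a concrete count. The cleanest argument there: take $w = [x_1,x_2]$; then $w$ is not surjective onto $G$ (its image lies in $G' \neq G$), and uniformity over the image would force $|G'| \cdot k(G) = |G|$ via the counting above. For a non-abelian nilpotent group, combine $k(G) \le |Z(G)| + (|G|-|Z(G)|)/p \le |G/G'| \cdot (\text{something})$ — more precisely use $k(G) = \sum_{\chi} 1 \le |G/G'| + (\text{number of nonlinear irreducibles})$ and $|G'| \cdot k(G) \ge |G'| \cdot |G/G'| = |G|$ with equality only when there are no nonlinear characters, i.e. $G$ abelian. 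So the inequality $|G'| \cdot k(G) \ge |G|$ with equality iff $G$ abelian is the crux, and it follows from the representation-theoretic count $\sum_\chi \chi(1)^2 = |G|$ together with $\#\{\text{linear }\chi\} = |G/G'|$. This step — verifying $|G'| \cdot k(G) \ge |G|$ with equality characterizing abelian groups — is where I would spend the most care, though it is ultimately a short character-theory argument.
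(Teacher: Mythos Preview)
Your forward direction is fine and matches the paper's: the word map on an abelian group is a homomorphism $G^n\to G$, so the nonempty fibers are cosets of the kernel.

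For the reverse direction you correctly reach for the commutator word $w=[x_1,x_2]$, which is exactly what the paper uses. The paper simply invokes a result of Ashurst that $\mu_{G,w}(1)>\mu_{G,w}(g)$ for every $g\in G$; combined with the existence of a nontrivial commutator in a non-abelian group, non-uniformity over the image is immediate. Your counting argument, however, has a genuine gap. From uniformity you correctly deduce that the common fiber size must equal the fiber over $1$, namely $|G|\,k(G)$, and hence $|K|\cdot k(G)=|G|$, where $K$ is the \emph{set of commutators} (the image of $w$). You then slide from $K$ to $G'$ and prove $|G'|\cdot k(G)\ge |G|$ with equality only for abelian $G$. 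That inequality is correct, but it points the wrong way: since $|K|\le |G'|$, it only gives $|K|\cdot k(G)\le |G'|\cdot k(G)$, and knowing the right-hand side strictly exceeds $|G|$ does not preclude $|K|\cdot k(G)=|G|$. So no contradiction follows from the linear-character count alone.

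What actually closes the argument is the Frobenius formula $|w^{-1}(g)| = |G|\sum_{\chi}\chi(g)/\chi(1)$. For $g\ne 1$ there is some irreducible $\chi$ with $g\notin\ker\chi$, hence $\mathrm{Re}\,\chi(g)<\chi(1)$, and so $|w^{-1}(g)|<|G|\,k(G)=|w^{-1}(1)|$. This is precisely Ashurst's lemma, and it makes your detour through the non-nilpotent/nilpotent case split via Theorem~\ref{uniform-prev} unnecessary: the commutator word already works for \emph{every} non-abelian $G$. If you want to salvage your counting route, note that the Frobenius formula gives the strict inequality $|K|\cdot k(G)>|G|$ for non-abelian $G$ (sum the strict bounds over $K$), which then does contradict $|K|\cdot k(G)=|G|$; but this is just a repackaging of the same character-theoretic fact.
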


\begin{proof}
If $G$ is abelian, then for every word map $w$, $w^{-1}(0)$ is a subgroup of $G^k$, and $w^{-1}(g)$ is either a coset of it or empty. Thus every word map is uniform over its image.

If $G$ is not abelian, then as shown by Ashurst, for $w=[x,y]$, we have that $\mu_{G,w}(1)>\mu_{G,w}(g)$ for all $g\in G$ \cite[Lemma 2.2.8]{As12}. Also, as $G$ is not abelian, $\mu_{G,w}(g)$ is not all zero for $g\neq 1$. If we regard $w$ as an $n$-variable word, then $\mu_{G,w}$ is not uniform over its image. 
\end{proof}

We will use the following Theorem from Cocke and Jensen \cite{CJ}:
\begin{theorem}\label{CJ}
Let $G$ and $H$ be finite abelian groups. Let $X$ be the set of natural numbers $r$ such that there exists a $k$ so that the word $w=x^k$ satisfies $|G|-|w(G)|=r$. Let $Y$ be the set of natural numbers $s$ such that there exists a $k$ so that the word $w=x^k$ satisfies $|H|-|w(H)|=s$. If $X=Y$, then $G\cong H$. 
\end{theorem}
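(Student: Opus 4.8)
The plan is to reduce the statement to the primary decomposition of finite abelian groups and then exploit unique factorization together with the Chinese Remainder Theorem; everything is an exercise in ``localizing'' the function $k \mapsto |w(G)|$ for $w = x^{k}$ one prime at a time. \emph{Step 1 (localize).} Write $G \cong \bigoplus_{p} G_{p}$ for the primary decomposition. For $w = x^{k}$ and a prime $p$, write $k = p^{j_{p}} m_{p}$ with $p \nmid m_{p}$; then $m_{p}$ is a unit modulo $|G_{p}|$, so $g \mapsto g^{m_{p}}$ is an automorphism of $G_{p}$ and $w(G_{p}) = G_{p}^{p^{j_{p}}}$. Hence $w(G) = \bigoplus_{p} G_{p}^{p^{j_{p}}}$, so $|w(G)| \mid |G|$ and
\[
  \frac{|G|}{|w(G)|} \;=\; \prod_{p \mid |G|} d_{p}, \qquad d_{p} := \frac{|G_{p}|}{|G_{p}^{p^{j_{p}}}|},
\]
each $d_{p}$ being a power of $p$ in the finite set $B_{p} := \bigl\{\, |G_{p}|/|G_{p}^{p^{j}}| : j \ge 0 \,\bigr\}$, which contains $1$. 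Since the exponents $(j_{p})_{p \mid |G|}$ range over all tuples of non-negative integers independently as $k$ varies (CRT), the set $D_{G} := \{\, |G|/|w(G)| : w = x^{k} \,\}$ is exactly the set of all products $\prod_{p} d_{p}$ with $d_{p} \in B_{p}$.

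\emph{Step 2 (recover $|G|$ and the $B_{p}$).} Taking $k = \exp(G)$ gives $w(G) = \{1\}$, so $|G| - 1 \in X$, while $|w(G)| \ge 1$ always, so $\max X = |G| - 1$ and $X$ determines $|G|$. Then $X$ determines $D_{G}$ via $D_{G} = \{\, |G|/(|G| - r) : r \in X \,\}$, since $|w(G)| = |G| - r$. Now every $D \in D_{G}$ divides $|G|$, and $D \mapsto \bigl(v_{p}(D)\bigr)_{p \mid |G|}$ (with $v_{p}$ the $p$-adic valuation) is injective by unique factorization, identifying $D_{G}$ with the Cartesian product $\prod_{p \mid |G|} \log_{p} B_{p}$ of subsets of $\N$. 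Since $0 \in \log_{p} B_{p}$ for every $p$, the $p$-th coordinate projection returns $\log_{p} B_{p} = \{\, v_{p}(D) : D \in D_{G} \,\}$. Hence $X$ determines $B_{p}$ for every prime $p$; in particular $X = Y$ forces $|G| = |H|$ and $B_{p}(G) = B_{p}(H)$ for all $p$.

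\emph{Step 3 (recover $G_{p}$).} Write $G_{p} \cong \bigoplus_{i=1}^{t} \Z/p^{a_{i}}$ with $a_{1} \ge \cdots \ge a_{t} \ge 1$. A direct computation gives $|G_{p}^{p^{j}}| = \prod_{i} p^{\max(a_{i} - j,\,0)}$, hence $|G_{p}|/|G_{p}^{p^{j}}| = p^{\sum_{i} \min(a_{i}, j)}$; writing $c_{\ell} := \#\{\, i : a_{i} \ge \ell \,\}$ for the conjugate partition, $\sum_{i} \min(a_{i}, j) = c_{1} + \cdots + c_{j}$. Thus $\log_{p} B_{p} = \{\, 0, c_{1}, c_{1} + c_{2}, c_{1} + c_{2} + c_{3}, \dots \,\}$ is the set of partial sums of the weakly decreasing sequence $c_{1} \ge c_{2} \ge \cdots$, which is positive for $\ell \le a_{1}$ and $0$ afterward; these partial sums strictly increase until they stabilize, so listing $\log_{p} B_{p}$ in increasing order and taking consecutive differences returns $(c_{\ell})_{\ell}$, hence the partition $(a_{i})_{i}$, hence $G_{p}$ up to isomorphism. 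Reassembling over all primes gives $G \cong \bigoplus_{p} G_{p} \cong \bigoplus_{p} H_{p} \cong H$.

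The one step needing genuine care, and really the heart of the matter, is Step 2: one must verify that the single integer $|G|/|w(G)|$ splits cleanly into independent local contributions, with no interference between primes. This rests on three elementary facts holding at once --- the exponents $j_{p}$ are independently prescribable (CRT), distinct prime powers $d_{p}$ are recoverable from their product (unique factorization), and each $B_{p}$ contains $1$ so that the full local set shows up in the coordinate projection of $D_{G}$. Individually each is routine, but the content of Theorem \ref{CJ} is precisely that together they let one read the isomorphism type of $G$ off the integers $\{\, |G| - |w(G)| : w = x^{k} \,\}$.
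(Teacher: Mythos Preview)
Your argument is correct and self-contained. Note, however, that the paper does not actually prove Theorem~\ref{CJ}: it is quoted from Cocke--Jensen \cite{CJ} and used as a black box, so there is no in-paper proof to compare against. Your proof therefore supplies something the paper omits. The strategy you use---localize at each prime via the primary decomposition, observe that $|G|/|G^{k}|$ factors as a product of independent $p$-local quantities, recover each $B_{p}$ by unique factorization, and then read off the conjugate partition of the Sylow $p$-subgroup from the consecutive differences of $\log_{p}B_{p}$---is the natural one, and each step checks out. The only cosmetic point worth tightening is the recovery of $|G|$: you use $\max X=|G|-1$, which presumes $|G|>1$ (and that $0\in\N$ if you want $X$ to literally contain $|G|-|G|=0$); both are harmless but could be stated explicitly.
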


We now restate and prove Theorem \ref{distribution-abelian}:

\begin{theorem}
For $n>1$, the set of distributions of all $n$-variable word maps on $G$ can be used to identify whether $G$ is abelian; moreover, if $G$ is abelian, then the set of distributions identifies $G$ up to isomorphism.  
\end{theorem}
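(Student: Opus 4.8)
The plan is to observe that all the data we need is already visible in the distribution functions $f_w$, and then to feed it into two black boxes: the lemma above (abelianness is equivalent to uniformity of every word map over its image) for the first assertion, and the Cocke--Jensen Theorem~\ref{CJ} (a finite abelian group is recovered from the image sizes of its power maps) for the second. The only subtlety is that we are handed the distributions as a set of functions on $\{1,\dots,|G|\}$ stripped of the group, so every quantity we use must depend on that set alone.

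For the first assertion, I would first note that $|G|$ is just the common length of the tuples $f_w$, and that for each $w$ the image size $|w(G)|$ is the number of nonzero entries of $f_w$. More to the point, ``$\mu_{G,w}$ is uniform over its image'' is an intrinsic property of $f_w$: it says precisely that all the nonzero entries of $f_w$ are equal. Thus the lemma above becomes the directly checkable criterion that $G$ is abelian if and only if \emph{every} distribution in the given set has all of its nonzero entries equal. This settles whether $G$ is abelian.

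Now suppose $G$ is abelian. The key point is that on an abelian group every word map is ``power-type'': if $x_i$ has exponent sum $k_i$ in $w$, then $w(\overline g)=g_1^{k_1}\cdots g_n^{k_n}$ for all $\overline g$, so $w(G)=G^{\gcd(k_1,\dots,k_n)}$; conversely, for every $d$ the one-variable word $x_1^{d}$, viewed as an $n$-variable word, has image $G^{d}$. Hence the set $\{\,|G|-|w(G)| : w\text{ an }n\text{-variable word map on }G\,\}$ --- which the previous paragraph shows is computable from the set of distributions --- coincides exactly with the set $X=\{\,|G|-|G^{k}| : k\,\}$ appearing in Theorem~\ref{CJ}. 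That theorem then recovers $G$ up to isomorphism, which finishes the proof.

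The step I expect to require the most care is this last identification of $X$: one must check that regarding a one-variable power map $x^{k}$ as an $n$-variable word multiplies every fiber size by $|G|^{n-1}$ but does not change the image (hence leaves $|G|-|w(G)|$ fixed), and that passing to $n>1$ produces no new images, because commutator words are trivial on an abelian group. Finally, the hypothesis $n>1$ enters only through the abelianness criterion: for $n=1$ one cannot even detect abelianness, since by Example~\ref{n=1ex} an extraspecial group of exponent $p$ and the elementary abelian group of the same order induce identical sets of one-variable distributions.
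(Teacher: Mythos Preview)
Your proposal is correct and follows essentially the same route as the paper: detect abelianness via the lemma that $G$ is abelian iff every $n$-variable word map is uniform on its image, then, in the abelian case, identify the set $X=\{|G|-|G^k|:k\}$ from the image sizes visible in the distributions and invoke Theorem~\ref{CJ}. The paper phrases the reduction to power maps as ``every word is automorphic to a power word via Nielsen transformations,'' while you phrase it via exponent sums and $w(G)=G^{\gcd(k_1,\dots,k_n)}$; these are the same observation, and your treatment of why passing between $1$ and $n$ variables neither loses nor creates image sizes is, if anything, more explicit than the paper's.
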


\begin{proof}
In an abelian group, every word is automorphic to a power word, as can be seen by using a series of Nielson transformation to cancel out all but a single variable. Since the number of not $k$-powers in $G$ is determined by the word map $x^k$, the set of distributions of word maps on $G$ for any number of variables determines the set of natural numbers $m$ such that there exists a $k$ so that the word $w=x^k$ satisfies $|G|-|w(G)|=m$. If we are looking at all distributions induced by $n$-variable word maps where $n>1$, then we can determine if $G$ is abelian. If $G$ is abelian, then by Theorem \ref{CJ} above we have determined $G$ up to isomorphism. 
\end{proof}

\begin{remark}
Note that as shown in Example \ref{n=1ex}, the distribution of 1-variable word maps is not enough to identify whether a group is abelian. However, if in addition to knowing that the distribution of 1-variable word maps, we also assume that $G$ is abelian, then Theorem \ref{CJ} applies and we can still identify $G$ up to isomorphism.
\end{remark}

The reduced free group of a nilpotent group is the direct product of the reduced free groups of its Sylow subgroups \cite[p.~41]{Ne67}. We show that without the group structure, the distributions of word maps of a nilpotent group determine the distributions of word maps of its Sylow subgroups, and similarly the distributions of word maps of all the Sylow subgroups determine the distributions of word maps of $G$.

\begin{theorem}
The distributions of word maps of a nilpotent group uniquely determines the distributions of word maps of its Sylow $p$-subgroups for all $p$, and vice versa.
\end{theorem}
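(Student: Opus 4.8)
The plan is to exploit the tensor/direct-product structure of reduced free groups. Write $G = P_1 \times \cdots \times P_t$ with $P_i$ the Sylow $p_i$-subgroup. By the cited fact from Neumann, $\mathbf{F}_d(G) \cong \mathbf{F}_d(P_1) \times \cdots \times \mathbf{F}_d(P_t)$, and concretely this means every $d$-variable word map $w$ on $G$ factors coordinatewise: $w(\overline{g}) = (w(\overline{g}^{(1)}), \ldots, w(\overline{g}^{(t)}))$ where $\overline{g}^{(i)}$ is the projection of the tuple $\overline{g}$ to $P_i^d$. Consequently the fiber sizes multiply: for $g = (g^{(1)}, \ldots, g^{(t)}) \in G$ we have $|w^{-1}(g)| = \prod_i |w^{-1}(g^{(i)})|$, where the $i$-th factor is the fiber size of the induced word map on $P_i$. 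In probability terms, $\mu_{G,w}$ is the product distribution $\mu_{P_1,w} \otimes \cdots \otimes \mu_{P_t,w}$. The content of the theorem is to make this work at the level of \emph{sets of distributions stripped of the group}, i.e.\ at the level of the functions $f_w \colon |G| \to \N$ with an unknown enumeration.

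First I would establish the forward direction: given the set $\mathcal{D}_G$ of distributions of all $d$-variable word maps on $G$, recover $\mathcal{D}_{P_i}$ for each $i$. From $\mathcal{D}_G$ we read off $|G|$ (the uniform distribution) and hence its prime factorization, so we know which primes $p_i$ and which orders $|P_i| = p_i^{a_i}$ occur. For a fixed prime $p = p_i$, consider the word map $x^m$ where $m = |G|/p^{a}$ is the $p'$-part of $|G|$; by Lemma~\ref{us} (or directly) this is surjective on every $P_j$ with $j \neq i$ and its image on $P_i$ is trivial, so precomposing or multiplying an arbitrary word $w$ by a suitable power in a fresh variable lets us "kill" all Sylow factors except $P_i$. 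More carefully: for any $d$-variable word $w$, the word $\widehat{w} = w^{p^{a}} \cdot (\text{padding})$ — or better, the observation that $\mu_{G, w^{p^a}}$ restricted to its support is exactly a rescaled copy of $\mu_{P_i, w}$ tensored with point masses at the identities of the other factors — recovers $\mu_{P_i,w}$ as a sub-distribution of $\mu_{G,\widehat{w}}$ supported on $\{(1,\ldots,1,h,1,\ldots,1) : h \in P_i\}$. Since the support of a product of point masses with one nontrivial factor is identifiable purely combinatorially (it is the set of coordinates receiving nonzero mass, and its size is $|P_i|$), we can extract the multiset of values of $f_w$ on $P_i$ without knowing the enumeration. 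Ranging over all $w$ yields $\mathcal{D}_{P_i}$.

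For the converse, suppose we are handed $\mathcal{D}_{P_1}, \ldots, \mathcal{D}_{P_t}$ as abstract sets of integer-valued functions (with unknown enumerations of each $P_i$). Every $d$-variable word $w$ on $G$ corresponds to a choice of one word map on each $P_i$ — but a subtlety is that the \emph{same} word $w$ must be used in each factor, whereas the reduced free group $\mathbf{F}_d(G)$ being the direct product means that as $w$ ranges over all words, the tuple $(w|_{P_1}, \ldots, w|_{P_t})$ ranges over \emph{all} of $\prod_i \mathbf{F}_d(P_i)$. This is exactly where the direct-product structure of the reduced free group is essential: the map $\mathbf{F}_d \to \prod_i \mathbf{F}_d(P_i)$ is surjective, so every combination of word maps on the Sylow subgroups is realized by a single word on $G$. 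Hence $\mathcal{D}_G = \{ \mu_1 \otimes \cdots \otimes \mu_t : \mu_i \in \mathcal{D}_{P_i}\}$, and forming product distributions and re-enumerating $G = \prod P_i$ lexicographically is a purely combinatorial operation on the stripped data.

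The main obstacle I anticipate is the bookkeeping around the lost enumeration: one must check that "take the product distribution" and "read off the $P_i$-factor from a distribution supported on a single coset of $P_i$" are both well-defined operations on the abstract functions $f_w$ without reference to the group, and that the enumeration of $G$ used throughout is consistent with the enumerations of the factors (or that consistency is irrelevant because we only ever claim to recover the \emph{set} of distributions up to relabeling). The cleanest way to handle this is to prove a lemma stating that for a direct product $A \times B$ with $|A|, |B|$ coprime, the set of distributions of $d$-variable word maps on $A \times B$ is the "tensor product" of the corresponding sets for $A$ and $B$, where the tensor product of two sets of distributions is defined combinatorially, and then induct on $t$; the coprimality guarantees that the surjectivity of $\mathbf{F}_d \twoheadrightarrow \mathbf{F}_d(A) \times \mathbf{F}_d(B)$ holds, which is the one place the hypothesis "nilpotent" (equivalently, product of coprime pieces) is really used.
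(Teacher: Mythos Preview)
Your overall strategy matches the paper's: exploit $G\cong P_1\times\cdots\times P_t$, note that fibers multiply so $\mu_{G,w}=\bigotimes_i\mu_{P_i,w}$, identify each Sylow piece inside the unknown enumeration via a power word, and for the converse use surjectivity of $\mathbf F_d\to\prod_i\mathbf F_d(P_i)$. The paper carries out the converse by the explicit construction $\hat w_p(x_1,\ldots)=w(x_1^{sk},\ldots)$ with $rp^n+sk=1$, which agrees with $w$ on $P$ and is a law on the complement, and then takes the product of the $\hat w_p$; your appeal to Neumann's direct-product decomposition of the reduced free group is an equally valid (and slightly slicker) way to say the same thing.

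There is, however, a genuine error in your forward direction: you have the exponents exactly reversed. If $m=|G|/p^a$ is the $p'$-part of $|G|$, then $x^m$ is a \emph{bijection} on $P_i$ (because $\gcd(m,p)=1$) and \emph{trivial} on each $P_j$ with $j\neq i$ (because $|P_j|$ divides $m$); likewise $w^{p^a}$ is trivial on $P_i$ and agrees with $w$ (up to the fixed bijection $x\mapsto x^{p^a}$) on the complement. So your claim that $\mu_{G,w^{p^a}}$ is ``$\mu_{P_i,w}$ tensored with point masses at the identities of the other factors'' is false as written; it is exactly the other way around. Once you swap $p^a$ and $m$ throughout, the argument works: the support of $x^m$ identifies $P_i$, and $\mu_{G,w^m}$ is a permuted copy of $\mu_{P_i,w}$ tensored with point masses, from which $\mathcal D_{P_i}$ is recovered. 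The paper's mechanic here is slightly different in detail: it characterises $P$ as the image of \emph{any} uniform word map whose image has size $p^n$ (and argues this forces the image in the complement to be trivial), and then recovers $\mu_{P,w}$ by restricting $\mu_{G,w}$ to the already-identified $P$ and rescaling by the constant factor $|w^{-1}_K(1)|$.
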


\begin{proof}
Given the distributions of word maps of a finite group $G$, we will show how to identify the sub-lists of the enumerated list $G$ that correspond to the Sylow $p$-subgroups. First, we note that from the distributions of word maps of $G$ we can determine the order of $G$. Write  $|G| = p^nk$ such that $p \nmid k$. Then the word map $x^k$ is uniformly distributed on its image, the Sylow $p$-subgroup. Since $G$ is nilpotent, we have $G = PK$ where $|P| = p^n$ and $|K| = k$. Then the following holds for every word map $w$ and $g\in P$, $h\in K$:
$$w^{-1}_G(gh) = w^{-1}_P(g) w^{-1}_K(h).$$
Suppose $w$ have image of size $p^n$ and is uniform. Then it is uniform when projected to both $P$ and $K$. However, this means that the size of the image in $K$ must divide $|K|^2 = k^2$. But $k$ and $p$ are co-prime, so the size of the image of $w$ in $K$ is 1. Thus, the image of $w$ in $G$ must be $P$. This allows us to identify the Sylow $p$-subgroups. For every word map, we may find its distribution as a word map on $P$ by looking at its distribution on $P$ and scale accordingly. 

For the backward direction, if we enumerate elements in $G$ as the Cartesian product of the elements in the Sylow $p$-subgroups, then from the above discussion we have that any distribution on $G$ is a product of distributions on the Sylow subgroups in the sense that 
$$w^{-1}_G(g_1\cdots g_\ell) = w^{-1}_{P_1}(g_1) \cdots w^{-1}_{P_\ell}(g_{\ell}).$$
Thus, we only need to show that the products of distributions on the Sylow subgroups are actually realized as the distribution of some word map on $G$. Let $P$ be a Sylow $p$-subgroup, and $K$ be its complement. Suppose $|P| = p^n$ and $|K| = k$, and $rp^n+sk = 1$. Then for any word $w$, define $\hat{w}_p(x_1,x_2,\cdots) = w(x_1^{sk},x_2^{sk},\cdots)$. Then $w = \hat{w}_p$ in $P$, and $\hat{w}_p$ is a law on $K$. Thus, the product of distributions on the Sylow subgroups are realized by the product of the $\hat{w}_p$ as a word map in $G$.
\end{proof}

\section{acknowledgments}
This material is based upon work done while the first author was supported by the National Science Foundation under Grant No. DMS-1502553. Some of the work was done while the first author was visiting the Army Cyber Institute. The views expressed are those of the author and do not reflect the official policy or position of the Army Cyber Institute, West Point, the Department of the Army, the Department of Defense, or the US Government.

\bibliographystyle{amsalpha}
\bibliography{ref}

\end{document}